\newtheorem{theorem}{Theorem}[section]
\newtheorem{corollary}[theorem]{Corollary}
\newtheorem{lemma}[theorem]{Lemma}
\newtheorem{proposition}[theorem]{Proposition}
\newtheorem{example}[theorem]{Example}
\newtheorem{remark}[theorem]{Remark}
\def\NN{\hbox{\sf I\kern-.13em\hbox{N}}}
\def\RR{\hbox{\sf I\kern-.14em\hbox{R}}}
\def\Cc{\hbox{\sf C\kern -.47em {\raise .48ex \hbox{$\scriptscriptstyle |$}}
   \kern-.5em {\raise .48ex \hbox{$\scriptscriptstyle |$}} }}
\begin{document}

\baselineskip 8 mm

\title{On the positive commutator in the radical}
\author{Marko Kandi\'c, Klemen 	\v Sivic}
\date{\today}

\begin{abstract}
\baselineskip 7mm
In this paper we prove that a positive commutator between a positive compact operator $A$ and a positive operator $B$ is in the radical of the Banach algebra generated by 
$A$ and $B$.  Furthermore, on every at least three-dimensional Banach lattice we construct finite rank operators $A$ and $B$ satisfying $AB\geq BA\geq 0$ such that the commutator $AB-BA$ is not contained in the radical of the Banach algebra generated by $A$ and $B$. These two results now completely answer to two open questions published in \cite{BDFRZ}. We also obtain relevant results in the case of the Volterra and the Donoghue operator. 
\end{abstract}

\maketitle

\noindent
{\it Math. Subj. Classification (2010)}: 47A10; 47A46; 47B07; 47B47. \\
{\it Key words}:  Positive commutators, Positive compact operators, Invariant subspaces, Radical.  \\ 
 
\baselineskip 7.5mm

\section{Introduction}
\vspace{5mm}

Positive commutators of positive operators on Banach lattices were first considered in \cite{BDFRZ}. The authors proved  \cite[Theorem 2.2]{BDFRZ} that a positive commutator $AB-BA$ of positive compact operators $A$ and $B$ is necessarily quasi-nilpotent and furthermore, it is contained in the radical of the Banach algebra generated by $A$ and $B$. They posed a question if the same is still true if we assume that only one of the operators $A$ and $B$ is compact. 
The first part of the problem was independently solved by Drnov\v sek \cite{RD} and Gao \cite{Gao}. Drnov\v sek proved even more. 

\begin {theorem}\cite[Theorem 3.1]{RD}\label{roman I}
Let $A$ and $B$ be bounded operators on a Banach lattice $E$ such that $AB\geq BA\geq 0$ and $AB$ is power-compact. Then $AB-BA$ is an ideal-triangularizable quasi-nilpotent operator. 
\end {theorem}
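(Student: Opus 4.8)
The plan is to separate two claims about $C:=AB-BA$ -- that it is power-compact and that it is quasinilpotent; ideal-triangularizability then follows from de Pagter's theorem by a transfinite argument.

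\emph{Power-compactness.} Since $BA\geq 0$ we have $0\leq C\leq AB$, and multiplying this successively on the left by $C$ and on the right by $AB$ gives $0\leq C^{n}\leq (AB)^{n}$ for all $n$. As $(AB)^{n}$ is compact for some $n$, the Aliprantis--Burkinshaw domination theorem (the cube of an operator dominated by a compact operator on a Banach lattice is compact) makes $C^{3n}$ compact, so $C$ is power-compact. In particular $\sigma(C)$ is at most countable with $0$ its only possible accumulation point, and if $r(C)>0$ then $r(C)$ is an eigenvalue of $C$ admitting a positive eigenvector (Krein--Rutman for the compact power $C^{3n}$, plus a finite-dimensional Perron--Frobenius argument on the corresponding eigenspace).

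\emph{Quasinilpotence -- the crux.} I would reduce to an irreducible situation. Pass to a maximal chain $\mathcal{C}$ of closed ideals of $E$, each invariant under both $A$ and $B$. On a gap $K/J$ of $\mathcal{C}$ the induced operators satisfy $\bar A\bar B\geq\bar B\bar A\geq 0$ with $\bar A\bar B$ power-compact, and by maximality the algebra generated by $\bar A$ and $\bar B$ acts ideal-irreducibly on $K/J$; by a Ringrose-type theorem for nests of ideals, $r(C)=0$ will follow once one knows $r(\bar A\bar B-\bar B\bar A)=0$ for every gap. So the whole question reduces to: \emph{if the algebra generated by $A$ and $B$ is ideal-irreducible and $AB$ is power-compact with $AB\geq BA\geq 0$, then $AB=BA$.} Suppose not, so $C\neq 0$ is a nonzero positive power-compact operator in an ideal-irreducible algebra; by a de Pagter--type theorem one would then have $r(C)>0$, and I expect the contradiction to come from exploiting the commutator structure rather than the bare inequality $0\leq C\leq AB$ (which only yields the useless bound $r(C)\leq r(AB)$). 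Writing $P=AB$, $Q=BA$ one has the identities $BP=QB$, $PA=AQ$, $AQ^{k}B=P^{k+1}$, $BP^{k}A=Q^{k+1}$ and $P^{n}-Q^{n}=\sum_{k=0}^{n-1}P^{\,n-1-k}\,C\,Q^{k}\geq 0$; the plan is to take $\lambda:=r(C)>0$, a positive $\lambda$-eigenvector $x$ of $C$, and the finite-dimensional subspace generated by $x$ under $A$ and $B$ (on which $P$ and $Q$ are simultaneously triangularizable and differ by $\lambda I$), and to play the resulting spectral shift $P=Q+\lambda I$ off against the coincidence of the nonzero eigenvalues of $AB$ and of $BA$ to reach a contradiction. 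This middle step -- distilling quasinilpotence of the commutator from positivity together with power-compactness, while manipulating only genuinely positive operators (note that $A$ and $B$ themselves need not be positive) -- is by far the hardest part of the argument.

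\emph{Triangularization.} Once $r(C)=0$ is known, $C$ is a positive, power-compact, quasinilpotent operator, and the conclusion follows by a routine transfinite argument: along a maximal chain of closed $C$-invariant ideals, the power-compact version of de Pagter's theorem (a nonzero positive quasinilpotent compact operator on a Banach lattice of dimension at least two has a nontrivial closed invariant ideal) together with the estimate $r(\bar C)\leq r(C)=0$ on the gaps forces the induced operator to vanish on every gap; hence $C$ is ideal-triangularizable, as required.
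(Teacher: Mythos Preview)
The paper does not prove this theorem at all: it is quoted from \cite{RD} as background in the introduction, with no accompanying proof. So there is nothing in the paper to compare your argument against. That said, your proposal should still be judged on its own merits, and the quasinilpotence step --- which you yourself flag as the crux --- is not a proof but a wish list.

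Your power-compactness argument is fine: from $0\le C\le AB$ one indeed gets $0\le C^{n}\le (AB)^{n}$ by repeated multiplication, and then Aliprantis--Burkinshaw finishes. The final ideal-triangularization of a positive power-compact quasinilpotent operator is likewise routine once quasinilpotence is known.

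The trouble is the middle. First, your reduction ``pass to a maximal chain of closed ideals invariant under both $A$ and $B$'' gains essentially nothing: since $A$ and $B$ are \emph{not} assumed positive, there is no mechanism producing such ideals, and the maximal chain may well be $\{0,E\}$. You are then back to the original problem with the vacuous extra hypothesis that $\{A,B\}$ has no common invariant ideal. Second, your appeal to a ``de~Pagter--type theorem'' to force $r(C)>0$ from $C\ne 0$ does not work as stated: de~Pagter's theorem concerns a single positive compact quasinilpotent operator and gives it a nontrivial invariant ideal, but that ideal need not be invariant under $A$ or $B$, so ideal-irreducibility of the algebra generated by $A$ and $B$ does not preclude $C$ from being quasinilpotent and nonzero. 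Third, and most seriously, the ``plan'' you sketch --- take a positive eigenvector $x$ of $C$, form the subspace generated by $x$ under $A$ and $B$, and argue that $P=Q+\lambda I$ there --- is unsupported. There is no reason the $\{A,B\}$-orbit of $x$ is finite-dimensional, and $Cx=\lambda x$ says nothing about $C$ on $Ax$ or $Bx$, so the asserted ``spectral shift'' $P=Q+\lambda I$ on that subspace is simply unjustified.

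In short: the genuinely hard step --- showing $r(AB-BA)=0$ from $AB\ge BA\ge 0$ with $AB$ power-compact but $A,B$ not positive --- is exactly where your outline stops being an argument. The actual proof in \cite{RD} handles this by working with the chain of closed ideals invariant under the \emph{positive} operator $AB$ (which are then automatically invariant under $BA$ and $C$ by domination), and exploiting on each gap the Perron--Frobenius type fact that $0\le \overline{BA}\le \overline{AB}$ with $\overline{AB}$ ideal-irreducible power-compact and $r(\overline{BA})=r(\overline{AB})$ forces $\overline{BA}=\overline{AB}$; the equality of spectral radii on the gaps is itself the delicate point, and it is not addressed by anything in your sketch.
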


This result is an improvement of \cite[Theorem 2.4]{BDFRZ} where the authors considered operators from appropriate Schatten ideals. In this paper 
we prove that a positive commutator $AB-BA$ of positive operators $A$ and $B$ is in the radical of the Banach algebra generated by $A$ and $B$ whenever one of the operators $A$ and $B$ is compact. We actually prove that $AB-BA$ is contained in the radical of the possibly bigger algebra that contains the Banach algebra generated by $A$ and $B$. 
This result now completely solves the open problem \cite[Open questions 3.7 (1)]{BDFRZ}. We also prove that on every at least three-dimensional Banach lattice $E$ there exist finite rank operators $A$ and $B$ on $E$ satisfying
$AB\geq BA\geq 0$, however the commutator $AB-BA$ is not contained in the radical of the Banach algebra generated by $A$ and $B$. 
Although this counterexample answers negatively to \cite[Open questions 3.7(2)]{BDFRZ} in the case of at least three-dimensional Banach lattices, there is a positive answer on two-dimensional Banach lattices.


\section {Preliminaries}

Let $E$ be a Riesz space, and let $E^+$ denote the set of all positive vectors in $E$. 
A linear subspace $J$ of $E$ is said to be an {\it ideal} whenever $0\leq |x|\leq |y|$ and $y\in J$ imply $x\in J$. 
An order closed ideal is said to be a {\it band}.  The set 
$$ A^d:=\{x\in E:\; |x|\wedge |a|=0  \textrm{ for all }a\in A\}$$ is called the {\it disjoint complement} of a set $A$ in $E$. The disjoint complement of a nonempty set in $E$ is always a band in $E$. A band $B$ of $E$ is said to be a {\it projection band} if $E=B\oplus B^d.$
A Riesz space $E$ is said to be a {\it normed Riesz space} whenever $E$ is equipped with a lattice norm. If a normed Riesz space is a complete metric space with the metric induced by the norm, then it is called a {\it Banach lattice}. It is well-known that every normed Riesz space is Archimedean. 

A nonzero vector $a\in E^+$ is an {\it atom} in a normed Riesz space $E$ if $0 \le x, y \le a$ and $x \land y = 0$ imply 
either $x=0$ or $y=0$, or equivalently, if $0 \leq x \leq a$ implies $x=\lambda a$ for some $\lambda \geq 0$, i.e., 
the principal ideal $B_a$ generated by $a$ is one-dimensional. It turns out that $B_a$ is a projection band \cite{LuxZaa}.
If  every set of mutually disjoint nonzero vectors  of an Archimedean Riesz space $E$ is finite, then $E$ is a finite-dimensional atomic lattice that is order isomorphic to $\mathbb R^n$ (where $n=\dim E)$  with componentwise ordering.  If an Archimedean Riesz space $E$ contains only finitely many pairwise disjoint atoms, then $E$ is an order direct sum of the finite-dimensional atomic part of the lattice $E$ and its disjoint complement that is an atomless part of the lattice $E$. 

An operator $T$ on a Banach lattice $E$ is said to be positive whenever $T$ maps the positive cone of $E$ into itself. It is well-known that every positive operator on a Banach lattice is a bounded operator. By $\mathcal L(X)$ and $\mathcal L_+(E)$ we denote the Banach algebra of all bounded operators on a Banach space $X$ and the set of all positive operators on a Banach lattice $E$, respectively.  A family $\mathcal F$ of operators on a Banach space $X$ is {\it reducible} if there exists a nontrivial closed subspace of $X$ that is invariant under every operator from $\mathcal F$. If there exists a maximal chain $\mathcal C$ of closed subspaces of $X$ such that every subspace from the chain $\mathcal C$ is invariant under every operator from $\mathcal F$, then $\mathcal F$ is said to be {\it triangularizable}, and $\mathcal C$ is called a {\it triangularizing chain} for $\mathcal F$.  A family $\mathcal F$ of operators on a Banach lattice $E$ is said to be {\it ideal-reducible} if there exists a nontrivial closed ideal of $E$ that is invariant under every operator from $\mathcal F$. 
Otherwise, we say that $\mathcal F$ is {\it ideal-irreducible}.
A family  $\mathcal F$ of operators on a Banach lattice $E$ is said to be {\it ideal-triangularizable} whenever there exists a chain of closed ideals of $E$ that is maximal as a chain in the lattice of all closed ideals of $E$ and every ideal from the chain is invariant under $\mathcal F$.  Every ideal-triangularizable family of operators is also triangularizable \cite{Drn}. 
If $\mathcal C$ is a complete chain of closed subspaces of a Banach space $X$, the {\it predecessor} of $\mathcal M$ in $\mathcal C$ is denoted by $\mathcal M_-$. If every subspace $\mathcal M$ from a complete chain $\mathcal C$ of closed subspaces of $X$ is  invariant under the operator $A\in \mathcal L(X)$ and $\mathcal M\neq \mathcal M_-$, the induced operator $A_{\mathcal M}$ on $\mathcal M/\mathcal M_-$ is defined by $A_{\mathcal M}(x+\mathcal M_-)=Ax+\mathcal M_-$ for all $x\in \mathcal M.$
In the case of a finite-dimensional lattice $\mathbb R^n$ we will use more common terms decomposable, indecomposable and completely decomposable instead of ideal-reducible, ideal-irreducible and ideal-triangularizable, respectively. 
  For a more detailed treatment on triangularizability we refer the reader to \cite{RadRos}. 

Positive operators $A$ and $B$ on a Banach lattice {\it semi-commute} whenever $AB\geq BA$ or $AB\leq BA.$ The {\it super right-commutant} $[A\rangle$ and the {\it super left-commutant} $\langle A]$ of a positive operator $A$ on a Banach lattice $E$ are defined by
$$[A\rangle=\{B\in \mathcal L_+(E):\; BA\geq AB\} \qquad \textrm{and}\qquad \langle A]=\{B\in \mathcal L_+(E):\; AB\geq BA\},$$ respectively.
For the terminology and details not explained about Banach lattices and operators on them we refer the reader to \cite{AlAp} and \cite{Sch}.

The {\it radical} of a Banach algebra $\mathcal A$ is the intersection of all maximal modular left ideals of $\mathcal A$ which coincides with the intersection of all maximal modular right ideals of $\mathcal A$. If $\mathcal A$ is a unital Banach algebra, then the radical of $\mathcal A$ is equal to the set 
$$\{a\in \mathcal A:\; r(ba)=0 \textrm{ for all }b\in \mathcal A\}=\{a\in \mathcal A:\; r(ab)=0 \textrm{ for all }b\in \mathcal A\}$$
where $r(x)$ denotes the spectral radius of an element $x\in \mathcal A$. 
Therefore, the radical of a unital Banach algebra is the largest among all ideals that consist of quasi-nilpotent elements. If a Banach algebra $\mathcal A$ is not unital, then we consider the spectrum of $a\in \mathcal A$ as the spectrum of the element  $a$ in the standard unitization of the algebra $\mathcal A$. 
For the terminology not explained throughout the text regarding Banach algebras we refer the reader to \cite{BonDun}.

\section {Counterexamples}\label{counter}

In \cite{BDFRZ} the authors proved \cite[Theorem 2.3]{BDFRZ} that the commutator $AB-BA$ of real matrices $A$ and $B$ is a nilpotent matrix whenever $AB\geq BA\geq 0$.  
The authors also extended this result to the case when $A\in \mathcal C^p$ and $B\in \mathcal C^q$ are bounded operators on $l^2$ with $AB\geq BA\geq 0$ where $1\leq p,q\leq \infty$ and $1/p+1/q=1$.
In this case, the commutator is a quasi-nilpotent trace class operator on $l^2$. See \cite[Theorem 2.4]{BDFRZ}. They asked (\cite[Open questions 3.7(2)]{BDFRZ} if the commutator $AB-BA$ is contained in the radical of the Banach algebra generated by $A$ and $B$. 
In this section we provide a counterexample on every at least three-dimensional Banach lattice. 

For the construction of our counterexample to \cite[Open questions 3.7(2)]{BDFRZ} we need some preparation. The proof of the following result can be found in \cite[Theorem 26.10]{LuxZaa}. We provide our proof for the sake of completness.  

\begin {proposition}\label{nevem}
Every infinite-dimensional Banach lattice has at least countably infinitely many nonzero pairwise disjoint positive vectors. 
\end {proposition}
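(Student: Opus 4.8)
The plan is to peel off the atomic part of $E$ and then produce the family inside an atomless piece by repeated bisection. First I would observe that it suffices to construct a single infinite pairwise disjoint family of nonzero positive vectors, since any such family contains a countably infinite subfamily.

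Next I would split into two cases. If $E$ has infinitely many pairwise disjoint atoms, those atoms already form an infinite pairwise disjoint family of nonzero positive vectors (atoms are by definition nonzero members of $E^+$), and we are done. Otherwise $E$ has only finitely many pairwise disjoint atoms, and the structure result for Archimedean Riesz spaces recalled in the preliminary section applies: it gives an order direct sum $E = E_a \oplus E_a^d$ in which $E_a$ is the finite-dimensional atomic part and $E_a^d$ is an atomless band. Since $E$ is infinite-dimensional and $\dim E_a < \infty$, the band $F := E_a^d$ is infinite-dimensional, hence in particular a nonzero atomless Riesz space; and because $F$ is an ideal, a pairwise disjoint family living in $F$ is one living in $E$. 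So the whole problem reduces to a nonzero atomless Riesz space $F$.

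For such an $F$ I would build the family by induction. Start with any $x_0 > 0$ in $F$; since $F$ has no atoms, $x_0$ is not an atom, so there exist disjoint nonzero $p_1, q_1$ with $0 \le p_1, q_1 \le x_0$, and I set $a_1 := p_1$ and $x_1 := q_1$. In general, once $x_k > 0$ is in hand it is again not an atom, so I choose disjoint nonzero $p_{k+1}, q_{k+1}$ with $0 \le p_{k+1}, q_{k+1} \le x_k$ and set $a_{k+1} := p_{k+1}$ and $x_{k+1} := q_{k+1}$. The residuals then satisfy $x_0 \ge x_1 \ge x_2 \ge \cdots$, each $a_k$ is nonzero with $a_k \le x_{k-1}$ and $a_k \wedge x_k = 0$, and a one-line computation shows that for $i < j$ one has $a_i \wedge a_j \le a_i \wedge x_{j-1} \le a_i \wedge x_i = 0$. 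Hence $\{a_1, a_2, \dots\}$ is the desired family.

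I do not expect a serious obstacle. The two points that call for a little care are the reduction step — after the atoms have been exhausted one must be sure that a nonzero (indeed infinite-dimensional) atomless band really does survive, which is precisely where the structure theorem from the preliminaries is used — and the disjointness bookkeeping in the bisection, where keeping the residuals $x_k$ decreasing is exactly what forces each new $a_{k+1}$ to be disjoint from all the previously chosen vectors.
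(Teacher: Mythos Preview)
Your proof is correct and follows essentially the same approach as the paper: both reduce to the atomless case via the atomic/atomless decomposition and then carry out the identical repeated-bisection induction, splitting a residual vector $y_n$ (your $x_k$) into two disjoint positive pieces at each stage. The only cosmetic differences are that the paper treats the atomless case first and invokes Zorn's lemma to obtain a maximal disjoint family of atoms, while you cite the structure result from the preliminaries directly.
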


\begin {proof}
Let us choose an aribtrary infinite-dimensional Banach lattice $E$. 
Suppose first that $E$ does not contain atoms. 
We claim that there exists an increasing sequence of sets $\{\mathcal A_n\}_{n\in\mathbb N}$ of positive nonzero pairwise disjoint vectors  with $|\mathcal A_n|=n$ and a sequence of positive nonzero vectors $\{y_n\}_{n\in\mathbb N}$ such that 
$y_n$ is disjoint with $\mathcal A_n$ for every positive integer $n.$
Let us validate the statement above when $n=1$. If $y_0$  is an arbitrary nonzero positive vector in $E$, then the fact that $E$ is without atoms implies that there exist
nonzero positive vectors $0\leq x_1,y_1\leq y_0$ satisfying $x_1\wedge y_1=0.$
We choose $\mathcal A_1=\{x_1\}$ and $y_1$ is disjoint with $\mathcal A_1.$
Suppose that there exist  sets $\mathcal A_1\subseteq \cdots\subseteq \mathcal A_{n}$ of positive nonzero pairwise disjoint vectors with $|\mathcal A_j|=j$ for $1\leq j\leq n$, and positive nonzero vectors $y_1,\ldots, y_n$ such that 
$y_j\in  \mathcal A_j^d$ for $1\leq j\leq n$.
Since $y_n$ is not atom, there exist positive nonzero vectors $x_{n+1}$ and $y_{n+1}$ satisfying $0\leq x_{n+1},y_{n+1}\leq y_n$ and $x_{n+1}\wedge y_{n+1}=0$. 
It is obvious that the vectors $x_{n+1}$ and $y_{n+1}$ are disjoint with $\mathcal A_n$. The set
$$\mathcal A_{n+1}:=\mathcal A_n\cup \{x_{n+1}\}$$ obviously satisfies $\mathcal A_n\subseteq \mathcal A_{n+1}$, $|\mathcal A_{n+1}|=n+1$ and $y_{n+1}\in A_{n+1}^d.$

Let $\mathcal A=\bigcup_{n=1}^\infty\mathcal A_n$.  The set $\mathcal A$ is countably infinite, and arbitrary two vectors from $\mathcal A$ are disjoint. 

Suppose now that $E$ contains at least one atom. By Zorn's lemma there exists a maximal set $\mathcal A$ of pairwise disjoint atoms of norm one. 
If $\mathcal A$ is infinite, then we are done. Suppose otherwise that $\mathcal A$ is a finite set, so that we have $E=\mathcal A^{dd}\oplus \mathcal A^d$ where 
$\mathcal A^{dd}$ is a finite-dimensional atomic lattice and $\mathcal A^d$ is an infinite-dimensional  atomless lattice. 
By already proved, $\mathcal A^d$ contains at least countably infinitely many nonzero positive pairwise disjoint vectors. 
\end {proof}

\begin {lemma}\label{funkcionali}
Let $E$ be a Banach lattice of dimension at least $n$ and let $x_1,\ldots, x_n$ be nonzero positive pairwise disjoint vectors. 
Then there exist nonzero positive functionals $\varphi_1,\ldots, \varphi_n$ on $E$ such that $\varphi_i(x_j)=\delta_{ij}$ for $i,j=1,\ldots, n.$
\end {lemma}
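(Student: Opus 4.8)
The plan is to exploit the disjointness of the $x_j$ together with the band projections they generate. Since $E$ is a Banach lattice with $x_1, \ldots, x_n$ nonzero positive and pairwise disjoint, I would first pass to the principal ideals (equivalently, the bands) $B_{x_j} = \{x \in E : |x| \wedge k x_j = 0 \text{ for no } k\}$, but more usefully work with the band $B_j$ generated by $x_j$; disjointness of the $x_j$ gives $x_i \in B_j^d$ for $i \neq j$. The key structural input I would invoke is that a band generated by a single positive vector in a Banach lattice need not be a projection band in general, so instead I would use a Hahn–Banach-type separation: for each fixed $j$, the vector $x_j$ is not in the closed ideal $I_j$ generated by $\{x_i : i \neq j\}$, because $x_j$ is disjoint from every $x_i$ with $i \neq j$ and hence from every element of $I_j$, so $x_j \notin I_j$ (as $x_j \neq 0$ and $x_j \wedge x_j = x_j \neq 0$).

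Next I would produce the functional. Having $x_j \notin \overline{I_j}$, Hahn–Banach yields a continuous functional $\psi_j$ with $\psi_j(x_j) \neq 0$ and $\psi_j|_{I_j} = 0$; replacing $\psi_j$ by its positive part is the delicate point, so instead I would argue directly on the quotient Banach lattice $E / \overline{I_j}$, which is again a Banach lattice, in which the image $\bar{x}_j$ of $x_j$ is nonzero and positive. On a Banach lattice any nonzero positive vector is normed by some positive functional (e.g. by the Hahn–Banach theorem applied with the sublinear functional $p(y) = \|y^+\|\,\|\bar{x}_j\|$-type bound, or simply because the positive functionals separate points of the positive cone), so there is a positive functional $\bar{\varphi}_j$ on $E/\overline{I_j}$ with $\bar{\varphi}_j(\bar{x}_j) > 0$. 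Pulling back along the (positive) quotient map and rescaling gives a positive functional $\varphi_j$ on $E$ with $\varphi_j(x_j) = 1$ and $\varphi_j(x_i) = 0$ for all $i \neq j$, which is exactly $\varphi_j(x_i) = \delta_{ij}$.

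Finally I would record that positivity of $\varphi_j$ is preserved under pullback because the quotient map $E \to E/\overline{I_j}$ is a positive (lattice) homomorphism, and that $\varphi_j \neq 0$ since $\varphi_j(x_j) = 1$. Running this construction for each $j = 1, \ldots, n$ produces the required family $\varphi_1, \ldots, \varphi_n$.

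The main obstacle is the passage from an arbitrary separating functional to a \emph{positive} one vanishing on the other $x_i$'s: one cannot simply take the positive part of a Hahn–Banach functional and retain the vanishing condition. Routing through the quotient Banach lattice $E/\overline{I_j}$ circumvents this, since there the separation and the positivity are obtained simultaneously — the image of $x_j$ is a genuine nonzero positive element, and nonzero positive elements of a Banach lattice are always detected by positive functionals. If one prefers to avoid quotients, the alternative is to note that when $x_j$ happens to generate a projection band $P_j$ (which holds automatically if $x_j$ is an atom), one can take $\varphi_j = \phi_j \circ P_j$ for a suitable $\phi_j \in E_+^*$, but the quotient argument has the advantage of uniformity and of not requiring any band-projection hypothesis.
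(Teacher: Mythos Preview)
Your argument is correct but follows a genuinely different route from the paper's. The paper fixes $i$, takes any positive functional $\psi_i$ on $E$ with $\psi_i(x_i)=1$, restricts it to the principal ideal $\mathcal J_i$ generated by $x_i$, and then invokes an extension theorem of Zaanen (\cite[Theorem~20.5]{Zaa}) to extend $\psi_i|_{\mathcal J_i}$ to a positive functional $\varphi_i$ on all of $E$ that vanishes on the disjoint complement $\mathcal J_i^d$; since each $x_j$ with $j\neq i$ lies in $\mathcal J_i^d$, this yields $\varphi_i(x_j)=\delta_{ij}$ directly. You instead mod out by the closed ideal generated by the \emph{other} vectors $\{x_i:i\neq j\}$ and detect the (nonzero, positive) image of $x_j$ in the quotient by a positive functional, then pull back. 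Your approach is more self-contained---it needs only that quotients by closed ideals are again Banach lattices with positive quotient map, and that nonzero positive elements are seen by positive functionals---whereas the paper's is shorter but leans on the specific positive-extension result from \cite{Zaa}. Neither argument requires any band-projection hypothesis, which you correctly identify as unavailable in general. (Your opening description of $B_{x_j}$ is garbled, but since you immediately abandon the band-projection line and pass to the quotient argument, this does not affect the proof.)
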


\begin {proof}
Let us choose an arbitrary $1\leq i\leq n$ and let  $\mathcal J_i$ be the ideal generated by the vector $x_i$. 
By \cite [Theorem 39.3.]{Zaa}, there exists a positive functional $\psi_i$ on $E$ with $\psi_i(x_i)=1.$ 
Let $\psi_i|_{\mathcal J_i}$ be the restriction of the functional $\psi_i$ on the ideal $\mathcal J_i$. By \cite[Theorem 20.5.]{Zaa} and its following remark there exists a positive functional $\varphi_i$ on $E$ which extends $\psi_i|_{\mathcal J_i}$ and $\varphi_i$ is zero on $\mathcal J_i^d.$ This implies $\varphi_i(x_j)=\delta_{ij}$ for all $j=1,\ldots,n$.
\end {proof}

Although Lemma \ref{funkcionali} is cruicial for Example \ref{Example 1} and Example \ref{Example 2}, it is also of its own interest.  
Before we indicate its importance let us recall first the well-known Jacobson's density theorem for Banach algebras \cite{BonDun}. 

\begin {theorem}
Let $\mathcal A$ be a Banach algebra and let $\pi:\mathcal A\to \mathcal L(X)$ be a continuous irreducible representation on a Banach space $X$. If $x_1,\ldots,x_n$ are linearly independent in $X$ and $y_1,\ldots, y_n$ are arbitrary in $X$, then there exists $a\in \mathcal A$ such that $\pi(a)x_i=y_i$ for all $i=1,\ldots, n.$
\end {theorem}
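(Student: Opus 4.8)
The plan is to carry out the classical proof of the Jacobson density theorem, whose core is purely algebraic, and to insert the Banach–space hypotheses at the single point where they are really needed: the identification of the commutant with the scalar field. The first step is a Schur–type lemma. Let $\Delta$ denote the commutant of the representation, i.e. the algebra of all $\mathcal{A}$–module endomorphisms of $X$. If $0\neq T\in\Delta$, then $\ker T$ and $\operatorname{ran}T$ are $\pi(\mathcal{A})$–invariant subspaces of $X$, so irreducibility forces $T$ to be bijective and $T^{-1}\in\Delta$; thus $\Delta$ is a division algebra. Next I would show $\Delta=\mathbb{C}\,\mathrm{id}_X$: fixing $x_0$ with $\pi(\mathcal{A})x_0\neq 0$, irreducibility makes $\pi(\mathcal{A})x_0$ a nonzero submodule, hence equal to $X$, so the bounded surjection $a\mapsto\pi(a)x_0$ of $\mathcal{A}$ onto $X$ is open by the open mapping theorem; writing $Tx_0=\pi(b)x_0$ and using the identity $T\pi(a)x_0=\pi(ab)x_0$ one reads off that every $T\in\Delta$ is bounded and that $\Delta$ is norm–closed in $\mathcal{L}(X)$. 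Hence $\Delta$ is a complex Banach division algebra, so $\Delta=\mathbb{C}\,\mathrm{id}_X$ by the Gelfand--Mazur theorem; in particular ``linearly independent'' in the statement just means linearly independent over $\mathbb{C}$.

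With the commutant pinned down I would prove the transitivity statement by induction on $n$. For $n=1$ the claim is precisely $\pi(\mathcal{A})x_1=X$. Assume the statement for $n-1$, and let $x_1,\dots,x_n$ be linearly independent. Put $L=\{a\in\mathcal{A}:\pi(a)x_i=0\text{ for }i=1,\dots,n-1\}$, a left ideal of $\mathcal{A}$, and suppose for contradiction that $\pi(L)x_n=0$. By the inductive hypothesis the $\mathcal{A}$–submodule of $X^{\,n-1}$ generated by $(x_1,\dots,x_{n-1})$ is all of $X^{\,n-1}$, so $\pi(a)(x_1,\dots,x_{n-1})\mapsto\pi(a)x_n$ is a well-defined $\mathcal{A}$–module map $S\colon X^{\,n-1}\to X$. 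Composing $S$ with the $i$-th coordinate embedding $X\hookrightarrow X^{\,n-1}$ yields an element of $\Delta=\mathbb{C}\,\mathrm{id}_X$, say multiplication by $c_i$, whence $\pi(a)\bigl(x_n-\sum_{i=1}^{n-1}c_ix_i\bigr)=0$ for every $a\in\mathcal{A}$; since $\{v\in X:\pi(\mathcal{A})v=0\}$ is an invariant subspace it is zero, so $x_n=\sum_{i=1}^{n-1}c_ix_i$, contradicting linear independence. Therefore $\pi(L)x_n\neq 0$, and being an $\mathcal{A}$–submodule of $X$ it equals $X$ by irreducibility. Finally, given arbitrary $y_1,\dots,y_n$, use the inductive hypothesis to pick $a_0$ with $\pi(a_0)x_i=y_i$ for $i\le n-1$, pick $a_1\in L$ with $\pi(a_1)x_n=y_n-\pi(a_0)x_n$, and take $a=a_0+a_1$.

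The book-keeping in the inductive step and the verification that the various subspaces are invariant are routine. The step I expect to carry the real content is the identification $\Delta=\mathbb{C}\,\mathrm{id}_X$: this is the only place where completeness of $\mathcal{A}$ and $X$ and continuity of $\pi$ are used, and it is where one must be precise about the meaning of irreducibility — the relevant notion here is the absence of \emph{any} nontrivial invariant subspace, not merely of closed ones, since in the merely topologically irreducible case the conclusion can fail already for $n=1$. Once $\Delta$ is known to be the scalars, the remainder is the standard algebraic density computation.
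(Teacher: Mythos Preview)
The paper does not prove this statement at all: it is quoted as the well-known Jacobson density theorem for Banach algebras, with a reference to Bonsall--Duncan \cite{BonDun}, and is included only to motivate the subsequent discussion about mapping disjoint positive vectors by positive operators. So there is no ``paper's own proof'' to compare against.

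That said, your proposal is essentially the classical argument one finds in Bonsall--Duncan. The Schur step, the boundedness of every $T\in\Delta$ via the open mapping theorem applied to $a\mapsto\pi(a)x_0$, the Gelfand--Mazur identification $\Delta=\mathbb{C}\,\mathrm{id}_X$, and the inductive density computation are all correct and in the standard order. Your remark at the end is also well placed: the argument genuinely requires \emph{algebraic} (strict) irreducibility, not merely the absence of nontrivial closed invariant subspaces; under only topological irreducibility one need not have $\pi(\mathcal{A})x=X$ for every nonzero $x$, and the induction base already fails. The paper's phrasing ``continuous irreducible representation'' is to be read in that strict sense, as in \cite{BonDun}.
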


In the special case when $\pi$ is the identity representation on $\mathcal L(X)$ Jacobson's density theorem states that a finite set of linearly independent vectors can be mapped onto arbitrary vectors by bounded operators. 
This fact can be also proved with an application of the Hahn-Banach theorem without applying the Jacobson's density theorem. Now it is natural to ask whether arbitrary linearly independent positive vectors in Banach lattices can be mapped to arbitrary positive vectors by positive operators. The following example shows that in general it cannot be done. 

\begin {example}\label{exam1}
The space $\mathbb R^2$ with a canonical ordering and equipped with the norm $\|\cdot\|_2$ is a Banach lattice. The only matrix mapping 
$\left[\begin {array}{c}
1\\
0 \end {array}\right]$ to 
$\left[\begin {array}{c}
1\\
0 \end {array}\right]$ and 
$\left[\begin {array}{c}
1\\
1 \end {array}\right]$ to 
$\left[\begin {array}{c}
0\\
1 \end {array}\right]$ is 
$$\left[\begin {array}{cc}
1&-1\\
0& 1\\
\end {array}\right]$$ which is clearly not positive. 
\end {example}

However, for a finite set of nonzero pairwise disjoint positive vectors we have a positive result. 

\begin {theorem}\label{jac}
Let $E$ be a Banach lattice and let $x_1,\ldots, x_n$ be arbitrary nonzero positive pairwise disjoint vectors in $E$. Then for arbitrary positive vectors $y_1,\ldots, y_n$ in $E$ there exists a positive operator $T$ on $E$ such that $Tx_j=y_j$ for all $j=1,\ldots, n.$
\end {theorem}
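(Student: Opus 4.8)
The plan is to build $T$ explicitly as a finite-rank positive operator using the positive functionals produced by Lemma \ref{funkcionali}. First I would observe that the hypotheses already force $\dim E\geq n$: nonzero pairwise disjoint vectors in a Riesz space are linearly independent, since if $\sum_j c_jx_j=0$ then (using $x_j\geq 0$ and disjointness) $\sum_j|c_j|x_j=\bigl|\sum_j c_jx_j\bigr|=0$, and a finite sum of positive vectors can vanish only if each summand vanishes, whence $c_j=0$ for all $j$. Thus Lemma \ref{funkcionali} applies and yields nonzero positive functionals $\varphi_1,\ldots,\varphi_n$ on $E$ with $\varphi_i(x_j)=\delta_{ij}$.

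Next I would define
$$
T:=\sum_{j=1}^n \varphi_j\otimes y_j,\qquad\text{i.e.}\qquad Tx=\sum_{j=1}^n\varphi_j(x)\,y_j\quad(x\in E).
$$
Each $\varphi_j$ is a positive functional on a Banach lattice, hence automatically bounded, so $T$ is a bounded operator of rank at most $n$. For positivity, note that if $x\in E^+$ then $\varphi_j(x)\geq 0$ and $y_j\in E^+$, so $Tx=\sum_j\varphi_j(x)y_j\in E^+$; hence $T$ is positive. Finally, for each $k$,
$$
Tx_k=\sum_{j=1}^n\varphi_j(x_k)\,y_j=\sum_{j=1}^n\delta_{jk}\,y_j=y_k,
$$
which is exactly what is required.

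There is essentially no genuine obstacle here: the entire content is packaged into Lemma \ref{funkcionali}, and the remaining verifications (linear independence of the $x_j$, boundedness and positivity of $T$, the interpolation identity) are routine. The one point worth stating carefully is why $\dim E\geq n$ is automatic, so that the dimension hypothesis of Lemma \ref{funkcionali} is met without being assumed in the statement of Theorem \ref{jac}. It is also worth remarking, in contrast with Example \ref{exam1}, that disjointness of the source vectors is precisely what makes the biorthogonal system of \emph{positive} functionals available, and hence what makes the positive interpolation possible.
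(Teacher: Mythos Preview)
Your proof is correct and follows essentially the same route as the paper's own argument: invoke Lemma~\ref{funkcionali} to obtain positive functionals $\varphi_i$ with $\varphi_i(x_j)=\delta_{ij}$, then set $T=\sum_i y_i\otimes\varphi_i$ and verify $Tx_j=y_j$. Your additional remark that $\dim E\geq n$ is automatic from the linear independence of nonzero disjoint vectors is a helpful clarification that the paper leaves implicit.
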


\begin {proof}
By Lemma \ref{funkcionali} there exist positive functionals $\varphi_1,\ldots, \varphi_n$ on $E$ such that $\varphi_i(x_j)=\delta_{ij}$ for all $i,j=1,\ldots, n.$
The positive operator 
$$T=\sum_{i=1}^n y_i\otimes \varphi_i$$ satisfies 
$$Tx_j=\sum_{i=1}^n \varphi_i(x_j)y_i=\sum_{i=1}^n\delta_{ij}y_j=y_j$$ for all $j=1,\ldots,n.$
\end {proof}

Throughout the rest of this section $E$ is assumed to be at least three-dimensional Banach lattice. 
Let the vectors $x_1,x_2$ and $x_3$ and the functionals $\varphi_1,\varphi_2$ and $\varphi_3$ be as in Lemma \ref{funkcionali}. 
The vectors $x_1,x_2$ and $x_3$, in fact,  exist since either $E$ is finite-dimensional and so it is order isomorphic to $\mathbb R^n$ with the componentwise order, or  $E$ is infinite-dimensional and  we may apply  Proposition \ref{nevem}. 

The following example shows that the commutator $AB-BA$ of compact operators $A$ and $B$ with $AB\geq BA\geq 0$ is in general  not contained in the radical of the Banach algebra generated by $A$ and $B$ if we do not assume $A\geq 0$. 

\begin {example}\label{Example 1}
An easy calculation shows that operators 
$$A=(-x_2+x_3)\otimes \varphi_1 +(x_1+x_2)\otimes \varphi_2 \qquad \textrm{and}\qquad B=x_2\otimes (\varphi_2+\varphi_3)$$ satisfy
$$AB=(x_1+x_2)\otimes (\varphi_2+\varphi_3) \qquad \textrm{and} \qquad BA=x_2\otimes \varphi_2,$$ so that $AB$ and $BA$ are positive operators on $E$. 
Since
$$AB-BA=x_1\otimes(\varphi_2+\varphi_3)+x_2\otimes \varphi_3\geq 0,$$ we also have 
$AB\geq BA\geq 0.$
The commutator $AB-BA$ is not contained in the radical of the Banach algebra $\mathcal A$ generated by the operators $A$ and $B$ since the operator 
$$A(AB-BA)=(x_3-x_2)\otimes \varphi_2+(x_1+x_3)\otimes \varphi_3$$ has $1$ as its eigenvalue with a corresponding eigenvector $x_1+x_3.$ 
\end {example}


The following example shows that the commutator $AB-BA$ of compact operators $A$ and $B$ with $AB\geq BA\geq 0$ is in general  not contained in the radical of the Banach algebra generated by $A$ and $B$ if we do not assume $B\geq 0$.

\begin {example}\label{Example 2}
Similarly as in Example \ref{Example 1} we can see that operators  
$$A=(x_2+x_3)\otimes \varphi_2\qquad \textrm{and}\qquad B=x_2\otimes \varphi_1+(x_2-x_1)\otimes \varphi_2+x_1\otimes \varphi_3$$ satisfy
$AB\geq BA\geq 0.$ An easy calculation also shows that 
$(AB-BA)B(x_2+2x_3)=x_2+2x_3$, so that $1$ is in the spectrum of the operator $(AB-BA)B.$
\end {example}

\section {Results}

Although the counterexamples in Section \ref{counter} answer negatively to \cite[Open questions 3.7(2)]{BDFRZ} in the case of at least three-dimensional lattices, we now provide a positive result in the two-dimensional case. 

\begin {proposition}\label{dvorazsezno}
Suppose that matrices  $A$ and $B$ in $M_2(\mathbb R)$ satisfy $AB\geq BA\geq 0.$ Then either  $AB=BA$ or $A$ and $B$ are simultaneously completely decomposable. 
\end {proposition}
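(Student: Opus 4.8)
The plan is to normalize the commutator $N:=AB-BA$ to a single $2\times2$ shape and then finish with an elementary computation on the entries of $A$ and $B$. First I would invoke \cite[Theorem 2.3]{BDFRZ} (equivalently, Theorem \ref{roman I} on the finite-dimensional lattice $\mathbb R^{2}$, where quasi-nilpotence is just nilpotence) to get that $N$ is nilpotent, so $\operatorname{tr}N=\det N=0$. Since $AB\geq BA$, the entries of $N$ are nonnegative; hence the two diagonal entries of $N$ are nonnegative with sum $\operatorname{tr}N=0$ and therefore vanish, so $N=\left[\begin{smallmatrix}0&p\\ q&0\end{smallmatrix}\right]$ with $p,q\geq0$, and $\det N=-pq=0$ forces $p=0$ or $q=0$. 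If $p=q=0$ then $AB=BA$ and there is nothing to prove. Otherwise I would use that $(B^{T},A^{T})$ again satisfies the hypotheses of the proposition (its commutator is $N^{T}\geq0$ and $A^{T}B^{T}=(BA)^{T}\geq0$) and that $A,B$ are simultaneously completely decomposable precisely when $A^{T},B^{T}$ are; so, replacing $(A,B)$ by $(B^{T},A^{T})$ if necessary, I may assume $N=\left[\begin{smallmatrix}0&b\\ 0&0\end{smallmatrix}\right]$ with $b>0$.

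Writing $A=[a_{ij}]$ and $B=[b_{ij}]$, the equations $N_{11}=0$, $N_{21}=0$ and $N_{12}=b$ read
\begin{equation*}
a_{12}b_{21}=a_{21}b_{12},\qquad a_{21}(b_{11}-b_{22})=b_{21}(a_{11}-a_{22}),\qquad b_{12}(a_{11}-a_{22})-a_{12}(b_{11}-b_{22})=b
\end{equation*}
(the entry $N_{22}=0$ being the negative of the first). If $b_{21}\neq0$, the first two equations give $a_{12}=a_{21}b_{12}/b_{21}$ and $a_{11}-a_{22}=a_{21}(b_{11}-b_{22})/b_{21}$, and upon substituting these into the third equation its left-hand side collapses to $0$, giving $0=b$, a contradiction. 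Hence $b_{21}=0$, and the first two equations reduce to $a_{21}b_{12}=0$ and $a_{21}(b_{11}-b_{22})=0$; if $a_{21}\neq0$ this forces $b_{12}=0$ and $b_{11}=b_{22}$, i.e.\ $B$ is scalar and $AB=BA$, contradicting $b>0$. Therefore $a_{21}=b_{21}=0$, both $A$ and $B$ are upper triangular, and the maximal chain $\{0\}\subset\mathbb R e_{1}\subset\mathbb R^{2}$ of closed ideals is invariant under both --- which is exactly simultaneous complete decomposability.

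The only nonelementary input is the nilpotency of $N$, and that is precisely where the hypothesis $BA\geq0$ (together with $AB\geq BA$) is used; everything after it is bookkeeping. The one place needing a bit of care is the reduction in the first paragraph: recognizing the transpose symmetry so that a single shape of $N$ has to be analyzed, and noticing that $b_{21}\neq0$ is already incompatible with $b>0$.
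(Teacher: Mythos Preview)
Your proof is correct. The route differs from the paper's, though both end with an elementary entry computation.

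The paper does not invoke the nilpotency of $N=AB-BA$ as a black box. Instead it splits according to whether the product $AB$ is indecomposable: if so, Perron--Frobenius type results (from \cite{AbraAlip}) together with $r(AB)=r(BA)$ force $AB=BA$; if not, a permutation makes $AB$ and $BA$ simultaneously upper triangular, and then $\sigma(AB)=\sigma(BA)$ plus $AB\geq BA\geq 0$ shows they share the same diagonal. From there the paper writes out the four scalar equations coming from $(AB)_{21}=(BA)_{21}=0$ and the matching diagonals, and cases on whether $b_{21}$ vanishes. Your approach replaces the indecomposable/decomposable dichotomy and the Perron--Frobenius step by a single citation of \cite[Theorem~2.3]{BDFRZ}, which immediately pins $N$ down to a strictly triangular nonnegative matrix; the transpose symmetry $(A,B)\mapsto(B^{T},A^{T})$ then lets you treat just one shape of $N$. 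The trade-off is that your argument is shorter and avoids the two external lemmas from \cite{AbraAlip}, at the cost of importing the nilpotency result; the paper's version is a bit more self-contained but needs the extra case split. Both computations are essentially the same once one knows $N$ is strictly (upper or lower) triangular.
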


If two matrices commute, then it is well-known that $A$ and $B$ are simultaneously triangularizable. Therefore we can conclude that real matrices $A,B\in M_2(\mathbb R)$ are simultaneously triangularizable whenever they satisfy the conditions of the preceding proposition. 
It should be clear that the same result does not hold for $n\geq 3$. If matrices $A$ and $B$ are simultaneously triangularizable, then $A$ and $B$ are upper-triangular with respect to some basis of the space $\mathbb C^n$. Now it is not difficult to see that the matrix $A(AB-BA)$ is nilpotent. However, Example \ref{Example 1} shows that there exist matrices $A,B\in M_3(\mathbb R)$ satisfying $AB\geq BA\geq 0$, and the matrix $A(AB-BA)$ is not nilpotent.

\begin {proof}
We may assume $B\neq 0$, as otherwise $A$ and $B$ commute. 
Suppose first that the matrix $AB$ is indecomposable. Then  \cite[Lemma 8.14]{AbraAlip},  
 \cite[Corollary 8.23]{AbraAlip} and the fact that we have $r(AB)=r(BA)$ imply $AB=BA$. 

Assume now that $AB$ is decomposable. The standard subspace invariant under $AB$ is invariant under $BA$ as well, since $AB\geq BA\geq 0$. The fact that the space is two-dimensional implies that $AB$ and $BA$ are simultaneously similar to upper-triangular matrices and ¸that this similarity can be obtained by a permutation matrix. 
We may obviously assume that $AB$ and $BA$ are already upper-triangular. 
Since $AB\geq BA\geq 0$ and the spectra of upper-triangular matrices are contained on their diagonals, we see that the matrices $AB$ and $BA$ have the same diagonal. Let us write
$$A=\left[\begin {array}{cc}
a&b\\
c&d\end {array}\right] \qquad \textrm{and}\qquad B=\left[\begin {array}{cc}
e&f\\
g&h\\
\end {array}\right].$$
From $$AB=\left[\begin {array}{cc}
\alpha&\gamma\\
0&\beta\\
\end {array}\right] \qquad \textrm{and}\qquad 
BA=\left[\begin {array}{cc}
\alpha&\delta\\
0&\beta\\
\end {array}\right]$$ we obtain 
$\alpha=ae+bg=ae+cf$, $\beta=cf+dh=bg+dh$ and $ce+dg=ag+ch=0.$

Let us first consider the case $g=0$. Then $ce=cf=ch=0.$
If $c=0$, then $A$ and $B$ are upper-triangular. If $c\neq 0$, then $e=f=h=0$ and $B=0$ which is impossible due to our assumption.   

Assume now that $g\neq 0$. Then we have 
$b=\frac{fc}{g}$, $d=-\frac{ce}{g}$ and $a=-\frac{ch}{g}.$ Now, a direct calculation shows
$\gamma=af+bh=0$ and $\delta=eb+fd=0$. Therefore $AB=BA$. 
\end {proof}

\begin {corollary}
Suppose that matrices  $A$ and $B$ in $M_2(\mathbb R)$ satisfy $AB\geq BA\geq 0.$
Then $AB-BA$ is in the radical of the Banach algebra generated by $A$ and $B$. 
\end {corollary}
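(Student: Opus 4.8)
The plan is to reduce the statement to Proposition \ref{dvorazsezno} and then use the elementary fact that strictly upper-triangular $2\times 2$ matrices have vanishing square. By Proposition \ref{dvorazsezno} there are exactly two cases. If $AB=BA$, then $AB-BA=0$, which belongs to every ideal and hence to the radical, and there is nothing more to do. So assume $A$ and $B$ are simultaneously completely decomposable; as noted in the paragraph following Proposition \ref{dvorazsezno} (complete decomposability is ideal-triangularizability, which implies triangularizability), this forces $A$ and $B$ to be simultaneously triangularizable, i.e. there is a basis of $\mathbb R^2$ (or of $\mathbb C^2$) with respect to which both $A$ and $B$ are upper-triangular.

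First I would observe that every element of the Banach algebra $\mathcal A$ generated by $A$ and $B$ is then upper-triangular with respect to that basis, since sums, scalar multiples, products and norm-limits of upper-triangular matrices are again upper-triangular. Next, the matrix $N:=AB-BA$ is \emph{strictly} upper-triangular, because the diagonal of a commutator of upper-triangular matrices is zero. For any $C\in\mathcal A$ the products $CN$ and $NC$ are then strictly upper-triangular as well, and every strictly upper-triangular $2\times 2$ matrix squares to zero; hence $CN$ and $NC$ are nilpotent, in particular quasi-nilpotent. Consequently the two-sided ideal of $\mathcal A$ generated by $N$ consists of finite sums of strictly upper-triangular matrices, all of which are nilpotent. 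Since the radical of $\mathcal A$ is the largest ideal consisting of quasi-nilpotent elements (passing to the standard unitization if $\mathcal A$ is not unital, which changes nothing, as the adjoined unit only contributes $N$ itself, again nilpotent), this ideal, and in particular $N=AB-BA$, lies in the radical of $\mathcal A$.

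There is essentially no obstacle here: all the work is carried by Proposition \ref{dvorazsezno}, and once simultaneous triangularizability is in hand the conclusion is immediate from the identity ``(strictly upper-triangular $2\times2$)${}^2=0$''. The only point deserving a word of care is the bookkeeping with the unitization in the non-unital case, together with the description of the radical recalled in the Preliminaries; neither is substantive.
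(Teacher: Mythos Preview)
Your proof is correct and follows the same overall strategy as the paper's: reduce to Proposition \ref{dvorazsezno}, then exploit simultaneous triangularizability to see that $C(AB-BA)$ is nilpotent for every $C$ in the algebra. The difference is in how the second step is executed. The paper invokes McCoy's theorem \cite[Theorem 1.3.4]{RadRos} to conclude that $p(A,B)(AB-BA)$ is nilpotent for every noncommutative polynomial $p$, and then uses continuity of the spectral radius on $M_2(\mathbb R)$ to pass from polynomials to the full Banach algebra. You instead observe directly that upper-triangular $2\times 2$ matrices form a closed subalgebra, so every $C\in\mathcal A$ is already upper-triangular in the triangularizing basis, and then use the trivial fact that a strictly upper-triangular $2\times 2$ matrix has square zero. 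Your route is more elementary and self-contained for the $2\times 2$ case, avoiding both McCoy's theorem and the continuity argument; the paper's route, while heavier, makes clearer why the argument generalizes whenever simultaneous triangularizability is available.
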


\begin {proof}
If $AB=BA$, then there is nothing to prove. 
Otherwise, Proposition \ref{dvorazsezno} implies that 
$A$ and $B$ are simultaneously completely decomposable. By McCoy's theorem \cite[Theorem 1.3.4]{RadRos} the matrix $p(A,B)(AB-BA)$ is nilpotent for every polynomial $p$ in two non-commuting variables. Since the spectral radius is continuous on the set of all $n\times n$ matrices, a matrix $C(AB-BA)$ is nilpotent for every $C$ in the Banach algebra generated by $A$ and $B$ which finishes the proof. 
\end {proof}

Although real matrices $A$ and $B$ satisfying $AB\geq BA\geq 0$ are not triangularizable in general, the situation becomes completely different if $A$ and $B$ are assumed to be positive. 

\begin {proposition}\label{spek izrek}
Let $A$ and $B$ be positive power-compact operators on a complex Banach lattice $E$ with a positive commutator $AB-BA$. Then $A$ and $B$ are simultaneously triangularizable. 
\end {proposition}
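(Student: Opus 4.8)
The plan is to reduce, by means of the Triangularization Lemma, to the case where $\{A,B\}$ is ideal-irreducible, and in that case to force the positive commutator to vanish. It suffices to prove that $\{A,B\}$ is triangularizable. By Zorn's lemma choose a maximal chain $\mathcal{E}$ of closed ideals of $E$ invariant under both $A$ and $B$; such a chain is automatically complete. For successive members $\mathcal I\subsetneq\mathcal J$ of $\mathcal E$ the quotient $\mathcal J/\mathcal I$ is a complex Banach lattice on which the induced operators $\hat A,\hat B$ are positive, again power-compact (restriction to an invariant ideal and passage to a quotient both preserve compactness), and satisfy $\hat A\hat B\ge\hat B\hat A\ge 0$, since the quotient map is simultaneously a lattice homomorphism and an algebra homomorphism. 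Moreover $\{\hat A,\hat B\}$ is ideal-irreducible on $\mathcal J/\mathcal I$: a nontrivial closed invariant ideal there would pull back to a closed ideal of $E$ lying strictly between $\mathcal I$ and $\mathcal J$ and invariant under $A$ and $B$, contradicting the maximality of $\mathcal E$. By the Triangularization Lemma (see \cite{RadRos}) it is therefore enough to prove the statement under the extra hypothesis that $\{A,B\}$ is ideal-irreducible.

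So assume $\{A,B\}$ is ideal-irreducible. I first observe that $AB$ is power-compact. From $AB\ge BA\ge 0$ one gets $BA^k\le A^kB$ for every $k$, and hence by induction $(AB)^N\le A^NB^N$ for every $N$ (each successive difference being a positive operator, since $C:=AB-BA\ge 0$ and $A,B\ge 0$). Taking $N$ large enough that $A^N$ and $B^N$ are compact gives $0\le(AB)^N\le A^NB^N$ with $A^NB^N$ compact, so by the Aliprantis--Burkinshaw theorem on powers of positive operators dominated by a compact operator a power of $AB$ is compact. Hence Theorem \ref{roman I} applies and $C$ is a quasinilpotent, ideal-triangularizable operator. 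Now suppose, for a contradiction, that $C\neq 0$. Fixing $m$ and $n$ with $A^m$ and $B^n$ compact, the operators $A^mC$, $CA^m$, $B^nC$ and $CB^n$ are positive compact operators, and the plan is to enlarge the ideal-irreducible semigroup generated by $A$ and $B$ by these positive compact pieces and to apply the de Pagter / Abramovich--Aliprantis--Burkinshaw theory of ideal-irreducible semigroups of positive operators containing a nonzero compact operator so as to extract a nontrivial closed ideal invariant under $A$ and $B$, contradicting ideal-irreducibility. Therefore $AB=BA$.

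Once $AB=BA$, the pair $\{A,B\}$ is a commuting pair of power-compact operators. If one of $A^m,\,B^n$ is nonzero it is a nonzero compact operator lying in the commutant of both $A$ and $B$, so Lomonosov's theorem followed by a transfinite induction through the Triangularization Lemma produces a triangularizing chain for $\{A,B\}$; and if $A$ and $B$ are both nilpotent, the commutative algebra they generate is nilpotent, hence triangularizable. Together with the reduction in the first paragraph, this proves the proposition. I expect the main obstacle to be precisely the implication ``ideal-irreducible $\Rightarrow AB=BA$'', that is, converting the mere positivity of a nonzero commutator into a common invariant closed ideal: the obvious candidates, the null ideal $\{x:C|x|=0\}$ and the closed ideal generated by the range of $C$, need not be invariant under both $A$ and $B$, because the available commutator identities (for instance $AC+CA=A^2B-BA^2$) turn out to be self-referential, so one is forced to route the argument through the enlarged semigroup and the semigroup form of de Pagter's theorem, and that is where the real work lies. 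Establishing that $AB$ is power-compact, by contrast, is routine.
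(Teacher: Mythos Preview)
Your reduction via the Triangularization Lemma to quotients on which the pair $\{A,B\}$ is ideal-irreducible is fine, and your verification that $AB$ is power-compact is correct (though, as it turns out, you never use it: the quasinilpotence of $C$ plays no role in what follows). The genuine gap is exactly where you say it is: you do not prove that pair-ideal-irreducibility of $\{A,B\}$ forces $AB=BA$. The de~Pagter line you sketch does not close it. Enlarging the multiplicative semigroup by the compact pieces $A^mC$, $CB^n$, etc.\ and invoking the semigroup form of de~Pagter's theorem would only yield a closed ideal invariant under that enlarged semigroup; to contradict ideal-irreducibility you would need the ideal to be invariant under $A$ and $B$ themselves, and you give no mechanism for that. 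This is the same self-referential obstruction you already identify for the null and range ideals of $C$.

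The paper bypasses the difficulty with one observation you are missing: for positive operators, a closed ideal invariant under $A+B$ is automatically invariant under each of $A$ and $B$ (closed ideals are solid, and $0\le Ax\le (A+B)x$ for $x\ge 0$). Thus pair-ideal-irreducibility of $\{A,B\}$ is the same as ideal-irreducibility of the \emph{single} positive operator $A+B$. The paper exploits this by replacing $B$ with $A+B$ at the outset (the commutator $A(A+B)-(A+B)A=AB-BA$ is still positive) so that $0\le A\le B$, takes a maximal chain of closed ideals invariant under $B$ alone, and then on each quotient invokes \cite[Theorem~1.3 and Corollary~3.3]{Dkankom}: when a positive power-compact $B$ is ideal-irreducible and $0\le A\le B$ with $AB\ge BA$, one has $AB=BA$. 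That is a citation, not a de~Pagter argument. From there the paper finishes along the lines of your last paragraph, but more carefully: it extends the ideal chain to a maximal chain $\mathcal{C}'$ of closed invariant subspaces, locates each $\mathcal{M}\in\mathcal{C}'$ between consecutive ideal members $\mathcal{J}_-\subseteq\mathcal{M}_-\subseteq\mathcal{M}\subseteq\mathcal{J}$, and uses commutativity on $\mathcal{J}/\mathcal{J}_-$ together with Lomonosov (or the nilpotent/scalar alternatives) to rule out $\dim(\mathcal{M}/\mathcal{M}_-)\ge 2$. Your final paragraph gestures at this but should be made explicit, since Lomonosov by itself only gives reducibility, not triangularizability, and the transfinite induction has to thread through the already-fixed ideal chain.
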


\begin {proof}
Assume that $A$ and $B$ are power-compact. Since a closed subspace is invariant under $A$ and $B$ if and only if it is invariant under $A$ and $A+B$, by \cite[Lemma 2.2]{Dkankom} we may assume that $0\leq A\leq B$ as we can replace $B$ with $A+B$.
We claim that $A$ and $B$ are simultaneously triangularizable. 
Let $\mathcal C$ be a maximal chain of closed ideals that are invariant under $B$. Since $0\leq A\leq B$, every ideal in $\mathcal C$ is also invariant under $A$. 
For every $\mathcal J\in \mathcal C$ the power-compact operator $B_{\mathcal J}$ is ideal-irreducible. \cite[Theorem 1.3]{Dkankom} and \cite[Corollary 3.3]{Dkankom} imply that 
$A_{\mathcal J}$ and $B_{\mathcal J}$ commute on $\mathcal J/\mathcal J_-.$
Let $\mathcal C'$ be a maximal chain of closed subspaces invariant under $A$ and $B$ which contains the chain $\mathcal C$. 
 
We claim that $\mathcal C'$ is a triangularizing chain for operators $A$ and $B$. 
Since the chain $\mathcal C'$ is maximal as a chain of subspaces invariant under $A$ and $B$, (i) and (ii) in \cite[Theorem 7.1.9]{RadRos} are satisfied. Therefore we only need to prove that the dimension of  $\mathcal M/\mathcal M_-$ is at most one  for an arbitrary subspace $\mathcal M$ in the chain $\mathcal C'$. 

Choose an arbitrary subspace $\mathcal M\in \mathcal C'.$
We claim that there exists a closed ideal $\mathcal J$ in $\mathcal C$ satisfying $\mathcal J_-\subseteq \mathcal M_-\subseteq \mathcal M\subseteq \mathcal J$ where $\mathcal J_-$ and $\mathcal M_-$ are predecessors of $\mathcal J$ and $\mathcal M$ in chains $\mathcal C$ and $\mathcal C'$, respectively. 
Let $\mathcal J$ be the intersection of all closed ideals in $\mathcal C$ that contain $\mathcal M$. Let $\mathcal J_-$ and $\mathcal M_-$ be the predecessors of $\mathcal J$ and $\mathcal M$ in the chains $\mathcal C$ and $\mathcal C'$, respectively.
Assume first that $\mathcal J_-=\mathcal J.$ If $\mathcal I$ is an arbitrary closed ideal in $\mathcal C$ that is properly contained in $\mathcal J$, then $\mathcal I$ is contained in $\mathcal M$, so that $\mathcal J_-$ is also contained in $\mathcal M$. 
Since $\mathcal M\subseteq \mathcal J$ and $\mathcal J_-\subseteq \mathcal M$, we have $\mathcal J=\mathcal J_-=\mathcal M=\mathcal M_-$.  
Assume now that $\mathcal J_-$ is a proper subset of $\mathcal J.$ Since $\mathcal C'$ is a chain, then $\mathcal J_-$ is a subset of $\mathcal M$, as $\mathcal J$ is the intersection of all closed ideals of $\mathcal C$ that contain $\mathcal M$, and $\mathcal J_-$ is a closed linear span of all closed ideals of $\mathcal C$ that are contained in $\mathcal M$. If $\mathcal J_-=\mathcal M$, then $\mathcal J_-$ is actually equal to $\mathcal J$ which is impossible due to our assumption $\mathcal J_-\neq \mathcal J$. Therefore $\mathcal J_-$ is a proper subset of $\mathcal M$ which implies $\mathcal J_-\subseteq \mathcal M_-$ so that we have $\mathcal J_-\subseteq \mathcal M_-\subseteq \mathcal M\subseteq \mathcal J$ and the claim is proved. 

Assume now that the dimension of $\mathcal M/\mathcal M_-$ is at least $2$. Since $A_{\mathcal J}$ and $B_{\mathcal J}$ commute on $\mathcal J/\mathcal J_-$ and $\mathcal J_-\subseteq \mathcal M_-$, 
the induced operators $A_{\mathcal M}$ and $ B_{\mathcal M}$ of the operators $A$ and $B$ on the quotient Banach space $\mathcal M/\mathcal M_-$ also commute. 
Suppose first that at least one of $A_{\mathcal M}$ and $B_{\mathcal M}$ is  not a scalar multiple of the identity operator. Without any loss of generality assume that $A_{\mathcal M}$ is not a scalar multiple of the identity operator. 
If $A_{\mathcal M}$ is nilpotent, then its kernel is invariant under both $A_{\mathcal M}$ and $B_{\mathcal M}$, so that the preimage of the kernel of the operator $A_{\mathcal M}$ is  a closed subspace invariant under $A$ and $B$ that is  properly contained between $\mathcal M_-$ and $\mathcal M$. This contradicts the maximality of $\mathcal C'$. If $A_{\mathcal M}$ is not nilpotent, then some power $(A_{\mathcal M})^n$ is a nonzero compact operator. By Lomonosov's theorem \cite{Lom} there exists a nontrivial closed subspace of $\mathcal M/\mathcal M_-$ invariant under $A_\mathcal M$ and $B_\mathcal M$. Similarly as above this is again a contradiction with maximality of $\mathcal C'$. If both $A_{\mathcal M}$ and $B_{\mathcal M}$ are scalar multiples of the identity operator, then every subspace of $\mathcal M/\mathcal M_-$ is invariant under $A_{\mathcal M}$ and $B_{\mathcal M}$ which would (similarly as above) lead to a contradiction with the maximality of the chain $\mathcal C'$.

Therefore the dimension of $\mathcal M/\mathcal M_-$ is less than or equal to one and the chain $\mathcal C'$ is a triangularizing chain for the operators $A$ and $B$. 
%
%
%
%
\end {proof}

In the proof of the preceding Proposition we applied the Lomonosov theorem to prove that the chain $\mathcal C'$ is a triangularizing chain for operators $A$ and $B$. 
In the real case this cannot be done always. If the dimension of $\mathcal M/\mathcal M_-$ is finite and at least three, then we can proceed as in the proof of Proposition \ref{spek izrek} also in the real case, since every real matrix on $\mathbb R^n$ with $n$ at least three has a nontrivial hyperinvariant subspace, which leads to a contradiction with the maximality of the chain $\mathcal C'.$ 
However, the problem occurs  when the dimension of $\mathcal M/\mathcal M_-$ is precisely two.    
In the following example we construct a positive indecomposable matrix $A\in \mathbb R^3$ with a two dimensional invariant subspace $\mathcal M$ such that the restriction of $A$ to $\mathcal M$ is irreducible. 

\begin {example}
Take the indecomposable matrix $A=\left[\begin {array}{ccc}
0&1&0\\
0&0&1\\
1&0&0\\
\end {array}\right]$ and 
$\mathcal M$ the vector space spanned by $\left[\begin{array}{c}
2\\
-1\\
-1\\ 
\end {array}\right]$ and 
$\left[\begin{array}{c}
0\\
1\\
-1\\ 
\end {array}\right]$. A straightforward calculations show that subspace $\mathcal M$ is invariant under $A$, and the restriction of $A$ to $\mathcal M$ is irreducible, since the only (real) eigenspace of $A$ is spanned by the vector $\left[\begin{array}{c}
1\\
1\\
1\\
\end{array}\right]$ which does not belong to $\mathcal{M}$.  
\end {example}

It should be noted that every positive matrix on $\mathbb R^2$ is triangularizable since the spectral radius of a positive matrix is always in its spectrum. 

The following theorem is the main result of this paper. It provides a positive answer to the remaining part of \cite[Open questions 3.7(1)]{BDFRZ} and at the same time it tells us actually that the commutator $AB-BA$ of a positive compact operator and a positive operator is contained in the radical of a bigger Banach algebra which contains the Banach algebra generated by $A$ and $B$. 

\begin {theorem}\label{keksington}
Let $A$ be a  positive compact operator on a Banach lattice $E$ and let $\mathcal A_1$ and $\mathcal A_2$ be the Banach algebras generated by $\langle A]$ and $[A\rangle$, respectively. 

\begin {enumerate}
\item [(a)] The operator $AB-BA$ is in the radical of the Banach algebra $\mathcal A_1$ for every $B\in \mathcal A_1$.
\item [(b)] The operator $AB-BA$ is in the radical of the Banach algebra $\mathcal A_2$ for every $B\in \mathcal A_2$. 
\end {enumerate}
\end {theorem}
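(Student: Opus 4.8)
The plan is to reduce the statement to a triangularizability result analogous to Proposition~\ref{spek izrek}, but now for the whole family $\langle A]$ (resp.\ $[A\rangle$) rather than for a single pair $A,B$. First I would observe that it suffices to prove (a), since (b) follows by passing to the adjoint: if $A$ is a positive compact operator on $E$, then $A^*$ is a positive compact operator on $E^*$, and $B\mapsto B^*$ carries $[A\rangle$ bijectively onto a subfamily of $\langle A^*]$, reversing the order of multiplication; a commutator lies in the radical of an algebra if and only if its adjoint lies in the radical of the adjoint algebra, because the radical is characterised by quasinilpotence of products and $r(C)=r(C^*)$. So from now on I focus on $\mathcal A_1$, the Banach algebra generated by $\langle A]=\{B\in\mathcal L_+(E):AB\ge BA\}$.

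The core step is to show that the family $\langle A]$ is ideal-triangularizable, i.e.\ there is a maximal chain $\mathcal C$ of closed ideals of $E$ each of which is invariant under every $B\in\langle A]$. For this I would run the Drnov\v sek-style maximality argument used in Proposition~\ref{spek izrek}: take a maximal chain $\mathcal C$ of closed ideals invariant under $A$ alone (these exist since $A$ is a positive operator). For $\mathcal J\in\mathcal C$ with $\mathcal J\neq\mathcal J_-$, the induced operator $A_{\mathcal J}$ on $\mathcal J/\mathcal J_-$ is a positive compact \emph{ideal-irreducible} operator; every $B\in\langle A]$ also leaves $\mathcal J$ and $\mathcal J_-$ invariant (because $AB\ge BA\ge 0$ forces closed ideals invariant under $A$ to be invariant under $B$, by the standard Andô/Drnov\v sek argument on super-commutants), so it induces $B_{\mathcal J}$ with $A_{\mathcal J}B_{\mathcal J}\ge B_{\mathcal J}A_{\mathcal J}\ge 0$. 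By \cite[Theorem 1.3]{Dkankom} and \cite[Corollary 3.3]{Dkankom} — exactly as invoked in Proposition~\ref{spek izrek} — the ideal-irreducibility of the compact operator $A_{\mathcal J}$ together with the positive-commutator condition forces $A_{\mathcal J}B_{\mathcal J}=B_{\mathcal J}A_{\mathcal J}$ for \emph{every} $B\in\langle A]$. Hence on each quotient $\mathcal J/\mathcal J_-$ the entire induced family $\{B_{\mathcal J}:B\in\langle A]\}$ commutes with the compact operator $A_{\mathcal J}$. Now refine $\mathcal C$ to a maximal chain $\mathcal C'$ of closed subspaces invariant under the whole of $\langle A]$. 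Repeating the Lomonosov/Ringrose argument from Proposition~\ref{spek izrek} on each $\mathcal M/\mathcal M_-$ (find the enclosing $\mathcal J\in\mathcal C$, use that $A_{\mathcal J}$ commutes with all the $B_{\mathcal J}$, and that a nonzero compact operator commuting with the family would produce, via Lomonosov, a nontrivial invariant subspace contradicting maximality unless $\dim \mathcal M/\mathcal M_- \le 1$) shows $\mathcal C'$ triangularizes $\langle A]$.

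Once $\langle A]$ is triangularized by $\mathcal C'$, so is the Banach algebra $\mathcal A_1$ it generates (a product, linear combination, or norm-limit of operators leaving every $\mathcal M\in\mathcal C'$ invariant again leaves every such $\mathcal M$ invariant), and the induced operators satisfy, for the generators and hence for all of $\mathcal A_1$, $(AB-BA)_{\mathcal M}=A_{\mathcal M}B_{\mathcal M}-B_{\mathcal M}A_{\mathcal M}$ with $\dim\mathcal M/\mathcal M_-\le 1$, so $(AB-BA)_{\mathcal M}=0$ for every $\mathcal M$. Therefore for any $C\in\mathcal A_1$ and any $B\in\mathcal A_1$ the operator $C(AB-BA)$ induces the zero operator on each one-dimensional quotient, so by the Ringrose spectrum-of-a-triangularizable-operator theorem \cite[Theorem 7.2.3]{RadRos} (the spectrum of $C(AB-BA)$ is the closure of the union of the spectra of its induced operators on the quotients) we get $r\big(C(AB-BA)\big)=0$. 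Since the radical of a Banach algebra is exactly the set of elements $a$ with $r(ca)=0$ for all $c$ in the algebra, this shows $AB-BA\in\mathrm{rad}\,\mathcal A_1$, proving (a); (b) follows by the adjoint reduction above.

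The main obstacle I anticipate is the passage from "each induced pair $A_{\mathcal J},B_{\mathcal J}$ commutes" to a single chain that simultaneously triangularizes the \emph{whole} family $\langle A]$: one must be careful that the refinement $\mathcal C'$ of $\mathcal C$ can be chosen maximal among chains invariant under \emph{all} of $\langle A]$ at once, and that the nesting lemma $\mathcal J_-\subseteq\mathcal M_-\subseteq\mathcal M\subseteq\mathcal J$ from Proposition~\ref{spek izrek} still goes through verbatim — it does, since that argument used only that $\mathcal C$ consists of ideals invariant under the operators and $\mathcal C'$ is a maximal invariant subspace chain refining it, never the cardinality of the family. A secondary point requiring care is the routine but essential verification that closed ideals invariant under $A$ are automatically invariant under every $B\in\langle A]$; this is where compactness of $A$ is not yet needed but the positivity of the commutator is crucial.
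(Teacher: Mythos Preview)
Your proposal has a genuine gap at what you yourself flag as the ``secondary point requiring care'': the assertion that every closed ideal invariant under $A$ is automatically invariant under every $B\in\langle A]$. This is false. On $E=\mathbb R^2$ take $A=\begin{pmatrix}1&0\\0&0\end{pmatrix}$ and $B=\begin{pmatrix}0&1\\0&0\end{pmatrix}$. Then $AB=B\ge 0=BA$, so $B\in\langle A]$; the ideal $\mathcal J=\mathrm{span}(e_2)$ is $A$-invariant but $Be_2=e_1\notin\mathcal J$. There is no ``standard And\^o/Drnov\v sek argument'' giving this conclusion; domination results say that ideals invariant under a \emph{majorant} are invariant under a minorant, and $AB\ge BA$ gives no such majorisation of $B$ by $A$.

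Worse, the goal of producing a single chain triangularizing the whole family $\langle A]$ is unattainable in general: for $A=I$ on a finite-dimensional lattice, $\langle A]$ is all of $\mathcal L_+(E)$, which generates $\mathcal L(E)$ and is certainly not simultaneously triangularizable. So the architecture of your argument (one chain for the entire super-commutant, then Ringrose) cannot succeed.

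The paper circumvents this by never seeking a universal chain. Given $B,C$ in the algebraic span $\mathcal A$ of words in $\langle A]$, it lists the finitely many letters $B_1,\dots,B_n,C_1,\dots,C_m\in\langle A]$ occurring in $B$ and $C$, forms the single positive operator $D=A+\sum B_i+\sum C_j$, and takes a maximal chain $\mathcal C$ of closed ideals invariant under $D$. Now domination ($0\le A,B_i,C_j\le D$) guarantees each $\mathcal J\in\mathcal C$ is invariant under $A,B,C$; on each quotient $D_{\mathcal J}$ is ideal-irreducible and semi-commutes with the compact $A_{\mathcal J}$, so \cite[Corollary~3.5]{Gao} forces $A_{\mathcal J}D_{\mathcal J}=D_{\mathcal J}A_{\mathcal J}$, whence $A_{\mathcal J}$ commutes with each $(B_i)_{\mathcal J}$ and hence with $B_{\mathcal J}$. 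Ringrose then gives $r(C(AB-BA))=0$ for these particular $B,C$, and a density-plus-compactness limit argument handles the closure $\mathcal A_1$. The essential idea you are missing is this \emph{local} reduction to a dominating operator $D$ tailored to each pair $(B,C)$.
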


\begin {proof}
Let $\mathcal A$ be the algebra generated by $\langle A]$ and let us choose $B$ and $C$ from $\mathcal A$. 
The operators $B$ and $C$ are finite linear combinations of words in letters from $\langle A]$. Let $B_1,\ldots, B_n$ and $C_1,\ldots, C_m$ be operators in $\langle A]$ which occur as letters in words whose appropriate linear combinations are equal to $B$ and $C$, respectively. The operator $D:=A+B_1+\cdots+B_n+C_1+\cdots +C_m$ satisfies $0\leq A\leq D$ and the commutator 
$$AD-DA=\sum_{i=1}^n(AB_i-B_iA)+\sum_{j=1}^m(AC_j-C_jA)$$ is obviously a positive operator on $E$.  
Let $\mathcal C$ be a maximal chain of closed ideals invariant under $D$. Maximality of the chain $\mathcal C$ implies that $\mathcal C$ is complete and that for every $\mathcal J\in \mathcal C$ with $\dim (\mathcal J/\mathcal J_-)>1$ the operator $D_{\mathcal J}$ is ideal-irreducible on $\mathcal J/\mathcal J_-$. Also, every closed ideal in $\mathcal C$ is invariant under $A$, $B_i$ and $C_j$ for all $1\leq i\leq n$ and $1\leq j\leq m$, so that it is also invariant under $A$, $B$ and $C$. 
\cite[Corollary 3.5]{Gao} implies that $A_{\mathcal J}$ and $D_{\mathcal J}$ commute on $\mathcal J/\mathcal J_-$. 
This implies, in particular, that $A_\mathcal J$ commutes with $(B_i)_{\mathcal J}$ for all $1\leq i\leq n$, so that actually $A_{\mathcal J}$ and $B_{\mathcal J}$ commute on $\mathcal J/\mathcal J_-.$
By Ringrose's theorem for complete chains \cite[Theorem 7.2.7]{RadRos}, it follows that 
$$\sigma(C(AB-BA))=\bigcup_{{\tiny \begin {array}{c}
						\mathcal J\in \mathcal C\\
						\mathcal J\neq \mathcal J_-\end {array}}}\sigma (C_\mathcal J(A_{\mathcal J}B_{\mathcal J}-B_{\mathcal J}A_{\mathcal J}))\cup \{0\}=\{0\}.$$


If $B$ and $C$ in $\mathcal A_1$ are arbitrary, then there exist  sequences of operators $\{B_n\}_{n\in\mathbb N}$ and $\{C_n\}_{n\in\mathbb N}$ in $\mathcal A$ converging to $B$ and $C$, respectively, so that 
$C_n(AB_n-B_nA)$ converges to $C(AB-BA).$ Since the norm limit of compact quasi-nilpotent operators is  quasi-nilpotent, $C(AB-BA)$ is quasi-nilpotent which finishes the proof of (a). 
The proof of (b) is similar so we omit it.

\end {proof}

\begin {corollary}\label{mlekozavse}
Suppose that $A$ and $B$ be positive operators on a Banach lattice $E$ with a positive commutator $AB-BA$. If at least one of the operators $A$ and $B$ is compact, then $AB-BA$ is contained in the radical of the Banach algebra generated by $A$ and $B$.   
\end {corollary}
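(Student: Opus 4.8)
The plan is to deduce the statement directly from Theorem \ref{keksington}. Observe that the hypotheses are not symmetric in $A$ and $B$: one of the two operators is compact, but the semi-commutation inequality singles out a direction. So I would split into cases according to which operator is compact and which direction the inequality $AB - BA \geq 0$ points, and in each case identify the compact operator with the operator called ``$A$'' in Theorem \ref{keksington} and check that the non-compact operator lies in the appropriate super-commutant $\langle A]$ or $[A\rangle$.

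First suppose $A$ is compact. If $AB \geq BA \geq 0$, then by definition $B \in \langle A]$, hence $B$ belongs to the Banach algebra $\mathcal A_1$ generated by $\langle A]$, and so does $A$ itself (since $A \in \mathcal L_+(E)$ and $A \leq A$ trivially gives $A \in \langle A]$, or simply $A = A\cdot I$ is not needed — $A$ commutes with itself so $A\in\langle A]$). Theorem \ref{keksington}(a) then gives that $AB - BA$ lies in the radical of $\mathcal A_1$. Since the Banach algebra $\mathcal B$ generated by $A$ and $B$ is a closed subalgebra of $\mathcal A_1$ containing $AB - BA$, and an element of the radical of a larger algebra which happens to lie in a subalgebra need not a priori be in the radical of the subalgebra, I must argue more carefully here: for every $C \in \mathcal B \subseteq \mathcal A_1$ the spectral radius of $C(AB-BA)$, computed in $\mathcal A_1$, is zero; but the spectral radius of an element of a Banach subalgebra equals its spectral radius in the ambient algebra (the spectrum can only grow when passing to a subalgebra, but the spectral radius is controlled by $\lim \|x^n\|^{1/n}$, which is intrinsic). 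Hence $C(AB-BA)$ is quasi-nilpotent in $\mathcal B$ for all $C \in \mathcal B$, so $AB - BA$ is a quasi-nilpotent element of $\mathcal B$ contained in the two-sided ideal it generates, all of whose elements are then quasi-nilpotent, and therefore $AB-BA$ lies in the radical of $\mathcal B$. If instead $BA \geq AB \geq 0$, then $B \in [A\rangle$ and we argue identically using Theorem \ref{keksington}(b) and the algebra $\mathcal A_2$.

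Second, suppose $B$ is compact rather than $A$. Then $BA - AB = -(AB - BA) \leq 0$, i.e. $BA \leq AB$; combined with positivity of the commutator this reads $AB \geq BA \geq 0$ again, but now with $B$ compact. Applying the first case with the roles of $A$ and $B$ interchanged — that is, rewriting the inequality as $BA \leq AB$ so that $A \in [B\rangle$, or observing that $AB \geq BA$ together with $B$ compact is exactly the hypothesis of Theorem \ref{keksington} applied to the compact operator $B$ with $A \in \langle B]$ — yields that $AB - BA$ lies in the radical of the Banach algebra generated by $B$ and $A$, which is the same algebra. The only genuinely delicate point in the whole argument is the passage from ``quasi-nilpotent in the big algebra'' to ``in the radical of the small algebra,'' handled above via the fact that $r(x) = \lim\|x^n\|^{1/n}$ is independent of which closed subalgebra one computes it in; everything else is bookkeeping about membership in super-commutants. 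I would close with one line noting that when $AB = BA$ the conclusion is trivial, so the displayed case analysis covers all possibilities.
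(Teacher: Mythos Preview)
Your argument is correct and follows essentially the same route as the paper: invoke Theorem~\ref{keksington} to place $AB-BA$ in the radical of the larger algebra $\mathcal A_1$ (or $\mathcal A_2$), then use the intrinsic spectral-radius formula $r(x)=\lim\|x^n\|^{1/n}$ together with $\mathcal B\subseteq\mathcal A_i$ to conclude that $r(C(AB-BA))=0$ for all $C\in\mathcal B$, hence $AB-BA$ lies in the radical of $\mathcal B$. Two small clean-ups: the hypothesis already fixes the direction $AB\geq BA$, so the sub-case $BA\geq AB$ is vacuous; and when $B$ is compact the correct membership is $A\in[B\rangle$ (as you write first), not $A\in\langle B]$---then Theorem~\ref{keksington}(b), applied with $B$ playing the role of the compact operator, gives $BA-AB$ and hence its negative $AB-BA$ in the radical.
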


\begin {proof}
Suppose that $A$ is a compact operator. Let us denote by $\mathcal A$ the Banach algebra generated by $A$ and $B$, and let us denote by $\mathcal A_1$ the Banach algebra generated by $\langle A]$. 
Since $AB-BA$ is in the radical of $\mathcal A_1$ by Theorem \ref{keksington}(a), it follows that $r(C(AB-BA))=0$ for an arbitrary operator $C\in \mathcal A_1$. 
The fact that we have $\mathcal A\subseteq \mathcal A_1$ implies $r(C(AB-BA))=0$ for every $C\in \mathcal A$ which means that $AB-BA$ is in the radical of $\mathcal A$.
For the proof when $B$ is compact we repeat the proof above and apply Theorem \ref{keksington}(b).  
\end {proof}

In the case of power-compact operators we can obtain the following result. 

\begin {theorem}\label{extensione}
If $A$ and $B$ are positive  power-compact operators on a Banach lattice with a positive commutator $AB-BA$, then some power of $AB-BA$ is in the radical of the Banach algebra generated by $A$ and $B$. 
\end {theorem}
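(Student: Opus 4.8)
The plan is to follow the pattern of the proof of Theorem~\ref{keksington}, but applied to a sufficiently large power of $C:=AB-BA$ in place of $C$ itself. The key extra ingredient will be that \emph{some} power of $C$ is a compact operator --- so that Ringrose's theorem for complete chains becomes applicable --- while a triangularizing chain for $\{A,B\}$ is already supplied by Proposition~\ref{spek izrek}.

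So the first thing I would establish is that, if $A^{p}$ and $B^{q}$ are compact, then $C^{N}=(AB-BA)^{N}$ is compact for a suitable exponent $N$. Since all operators involved are positive, replacing a factor $BA$ inside a word in $A$ and $B$ by $AB$ only increases the word in the order (the surrounding factors are positive and $BA\le AB$), so a ``bubble sort'' yields $(AB)^{k}\le A^{k}B^{k}$; and $A^{k}B^{k}$ is compact once $k\ge\max(p,q)$, since it then has $A^{p}$ as a factor. Because $0\le C\le AB$ with $C\ge 0$, one gets $0\le C^{k}\le (AB)^{k}\le A^{k}B^{k}$ for every $k$; taking $k=\max(p,q)$ and applying the Aliprantis--Burkinshaw theorem (a positive operator dominated by a compact positive operator has a compact cube), it follows that $C^{3k}$ is compact, so I may put $N=3k$.

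Next I would invoke Proposition~\ref{spek izrek}: since $A$ and $B$ are positive power-compact operators with positive commutator, they are simultaneously triangularizable, so let $\mathcal C$ be a triangularizing chain for $\{A,B\}$. It is a complete chain of closed subspaces, each of which is invariant under $A$ and $B$, hence --- being closed --- under every element of the Banach algebra $\mathcal A$ generated by $A$ and $B$, in particular under $C^{N}$. For every nontrivial gap $\mathcal M/\mathcal M_{-}$ of $\mathcal C$ the induced operators $A_{\mathcal M}$ and $B_{\mathcal M}$ are scalars, so $C_{\mathcal M}=A_{\mathcal M}B_{\mathcal M}-B_{\mathcal M}A_{\mathcal M}=0$. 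Consequently, for an arbitrary $D\in\mathcal A$, the operator $DC^{N}$ is compact, the complete chain $\mathcal C$ is invariant under it, and all its diagonal coefficients vanish, $(DC^{N})_{\mathcal M}=D_{\mathcal M}(C_{\mathcal M})^{N}=0$; Ringrose's theorem for complete chains \cite[Theorem~7.2.7]{RadRos} then gives $\sigma(DC^{N})=\{0\}$. Thus $r(DC^{N})=0$ for every $D\in\mathcal A$ (and $\sigma(C^{N})=\{0\}$ as well), which means that $C^{N}=(AB-BA)^{N}$ belongs to the radical of $\mathcal A$.

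The only genuinely new step, I expect, is the first one --- producing a compact power of $C$. That is where the hypothesis $AB\ge BA$ (and not merely positivity of the commutator) is essential, through the ``bubble sort'', and one has to keep in mind that the Aliprantis--Burkinshaw theorem only yields a compact cube. The power-compactness of both $A$ and $B$ also re-enters via Proposition~\ref{spek izrek}. Everything afterwards is the triangularize-then-apply-Ringrose scheme already used in the proofs of Proposition~\ref{spek izrek} and Theorem~\ref{keksington}.
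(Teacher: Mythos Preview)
Your approach is essentially the paper's: establish that some power of $C=AB-BA$ is compact, invoke Proposition~\ref{spek izrek} for simultaneous triangularizability, and then conclude via Ringrose that $DC^{N}$ is quasi-nilpotent for every $D$ in the algebra. Your explicit ``bubble sort plus Aliprantis--Burkinshaw cube'' argument for the compactness of $C^{N}$ is correct and a bit more self-contained than the paper's bare citation of \cite[Lemma~2.3]{Dkankom}; the paper uses \cite[Theorem~7.2.3]{RadRos} where you use \cite[Theorem~7.2.7]{RadRos}, but either works here.

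There is, however, one genuine gap. Proposition~\ref{spek izrek} is stated and proved only for \emph{complex} Banach lattices; its proof relies on Lomonosov's theorem, and the paper explicitly notes immediately afterward that the argument can break down in the real case when a two-dimensional quotient arises. Your proposal therefore covers only the complex case as written. The paper closes this gap by complexification: for a real lattice $E$, pass to $E_{\mathbb C}$, $A_{\mathbb C}$, $B_{\mathbb C}$, observe that $A_{\mathbb C}B_{\mathbb C}\ge B_{\mathbb C}A_{\mathbb C}$ and that $(A_{\mathbb C}B_{\mathbb C}-B_{\mathbb C}A_{\mathbb C})^{n}=((AB-BA)^{n})_{\mathbb C}$ is still compact, run the complex argument to obtain $r(C_{\mathbb C}(A_{\mathbb C}B_{\mathbb C}-B_{\mathbb C}A_{\mathbb C})^{n})=0$ for all $C_{\mathbb C}\in\mathcal A_{\mathbb C}$, and deduce the same for $C\in\mathcal A$. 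This step is routine but cannot be omitted.
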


\begin {proof}
Let $\mathcal A$ be the Banach algebra generated by $A$ and $B$. By \cite[Lemma 2.3]{Dkankom}, the operator $AB-BA$ is a power-compact operator. If $AB-BA$ is nilpotent, then it is clear that the apropriate power of the operator $AB-BA$ is in the radical of $\mathcal A$. 
Thefore we may assume that $AB-BA$ is not nilpotent.  There exists a positive integer $n$ such that $(AB-BA)^n$ is a compact operator.

Suppose first that the lattice $E$ is a complex Banach lattice. Proposition \ref{spek izrek} implies that $A$ and $B$ are simultaneously triangularizable. 
Let $\mathcal C$ be one of the triangularizing chains for operators $A$ and $B$. It is not hard to see that $\mathcal C$ is a triangularizing chain for every operator $C$ from $\mathcal A$. Since every diagonal coefficient of the operator $AB-BA$ is zero, every diagonal coefficient of the operator $C(AB-BA)^n$ is zero as well. 
Compactness of the operator $(AB-BA)^n$ and \cite[Theorem 7.2.3]{RadRos} implies that $C(AB-BA)^n$ is quasi-nilpotent, and since $C\in\mathcal A$ was arbitrary we obtain that $(AB-BA)^n$ is contained in the radical of $\mathcal A$. 

Assume now that $E$ is a real Banach lattice. Let $E_{\mathbb C}$ be the complexification of the Banach lattice $E$ and let $A_{\mathbb C}$ and $B_{\mathbb C}$ be the standard complexifications of operators $A$ and $B$ on $E_{\mathbb C}$, respectively.   It is not hard to see that we have $A_{\mathbb C}B_{\mathbb C}\geq B_{\mathbb C}A_{\mathbb C}$ on $E_{\mathbb C}$, so that by Proposition \ref{spek izrek} operators $A_{\mathbb C}$ and $B_{\mathbb C}$ are simultaneously triangularizable.
Since we have
$$(A_{\mathbb C}B_{\mathbb C}-B_{\mathbb C}A_{\mathbb C})^n=((AB-BA)_{\mathbb C})^n=((AB-BA)^n)_{\mathbb C},$$
the operator $(A_{\mathbb C}B_{\mathbb C}-B_{\mathbb C}A_{\mathbb C})^n$ is a nonzero compact operator on $E_{\mathbb C}$. 
By the proof above  we see that $(A_{\mathbb C}B_{\mathbb C}-B_{\mathbb C}A_{\mathbb C})^n$ is contained in the radical of the algebra $\mathcal A_{\mathbb C}$ which implies that $r(C_{\mathbb C}(A_{\mathbb C}B_{\mathbb C}-B_{\mathbb C}A_{\mathbb C})^n)=0$ for all $C_{\mathbb C}\in \mathcal A_{\mathbb C}.$
This implies that $r(C(AB-BA)^n)=0$ for all $C\in \mathcal A$ finishing the proof. 
\end {proof}

\section {A result for the Volterra operator and the Donoghue  operator}

An operator on a Banach space is said to be {\it unicellular} if its lattice of closed invariant subspaces is totally ordered. 
An application of the Lomonosov theorem shows that every compact operator is triangularizable which implies that every unicellular compact operator has a unique triangularizing chain. 
The Volterra operator acting on $L^2[0,1]$ defined by $(Vf)(x)=\int_0^x f(t)dt$ is a well-known example of a unicellular operator \cite{RadRos2} with its lattice of closed invariant subspaces consisting of  subspaces of the form
$L^2[t,1]$, i.e., of all functions from $L^2[0,1]$ which are zero almost everywhere on the interval $[0,t].$ Volterra operator has a lot of interesting properties. For an example, 
if a normal operator $N$ commutes with a Volterra operator, then $N$ is a multiple of the identity operator \cite{ZemVolt}. An operator $T$ on a Banach space $X$ is said to be a {\it strong quasi-affinity} whenever for arbitrary closed subspace $\mathcal M$ invariant under $T$ we have that  $T\mathcal M$ is dense in $\mathcal M$. 

The Volterra operator has this remarkable property. It is not hard to see that the range of $V$ is dense in $L^2[0,1].$ 
The function $x\mapsto \frac{x-t}{1-t}$ induces a topological isomorphism $\Phi:L^2[0,1]\to L^2[t,1]$ defined by 
$$(\Phi f)(x)=f\left(\frac{x-t}{1-t}\right).$$
Let us denote by $V_t$ the restriction of the Voterra operator to its invariant subspace $L^2[t,1].$
It is not hard to see that $V_t$ satisfies 
$$(V_t\Phi f)(s)=(1-t)\int_0^\frac{s-t}{1-t}f(v)dv=(1-t)(\Phi V f)(s)$$ for all $s\in [t,1].$ 
Since $V$ has a dense range, $V_t$ also has a dense range. 

The other example of a unicellular operator is the Donoghue operator on a separable Hilbert space $\mathcal H$. 
Let $\{e_n\}_{n\in\mathbb N_0}$ be an orthonormal basis of $\mathcal H$ and let $\{w_n\}_{n\in\mathbb N}$ be a sequence of nonzero complex numbers such that $\{|w_n|\}_{n\in \mathbb N}$ is monotone decreasing and is in the space $l^2.$ The {\it Donoghue operator} $S$ with a weight sequence $\{w_n\}_{n\in\mathbb N}$ is defined by $Se_0=0$ and $Se_n=w_ne_{n-1}$ for $n>0.$  The operator $S$ is a unicellular operator \cite{RadRos2} with its lattice of closed invariant subspaces equal to 
$\{\mathcal M_n\}_{n\in \mathbb N_0}$ where $\mathcal M_n$ is the linear span of the vectors $e_0,\ldots, e_n.$ An easy calculation shows that $S\mathcal M_n=\mathcal M_{n-1}$, so that the induced operator by the operator $S$ on the quotient Banach space $\mathcal M_n/\mathcal M_{m}$ is nonzero whenever the dimension of $\mathcal M_n/\mathcal M_m$ is at least $2$. The later fact will be considered in general in Proposition \ref{zdravje}. We will be only concerned with the positive (the weights are positive) Donoghue operator acting on $l^2$ ordered componentwise. 

It is well-known that the Volterra operator and the Donoghue operator are compact operators. 

%

\begin {proposition}\label{zdravje}
Let $X$ be a Banach space and let $T$ be a compact unicellular operator on $X$. Then for an arbitrary subspace $\mathcal M$ from the triangularizing chain $\mathcal C$ of $T$ only one of the following statements hold. 
\begin {enumerate}
\item [(a)] $\mathcal M=\overline{T\mathcal M}.$
\item [(b)] $\mathcal M=\mathcal M_-$ and $\overline{T\mathcal M}$ is of codimension $1$ in $\mathcal M$. 
\item [(c)] $\overline{T\mathcal M}=\mathcal M_-$ is of codimension $1$ in $\mathcal M$. 
\end {enumerate}
Moreover, if $\mathcal C$ is a continuous chain, then $T$ is a strong quasi-affinity. 
\end {proposition}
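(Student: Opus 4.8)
The plan is to deduce everything from one structural fact: since $T$ is unicellular, the family $\mathrm{Lat}(T)$ of all closed invariant subspaces of $T$ is totally ordered, and since $\mathcal C$ is a maximal chain of closed subspaces each of which is invariant, it must coincide with $\mathrm{Lat}(T)$ — any closed invariant subspace lying outside $\mathcal C$ could be adjoined to $\mathcal C$, contradicting maximality. In particular, for $\mathcal M\in\mathcal C$ the subspace $\overline{T\mathcal M}$, being closed and invariant ($T(\overline{T\mathcal M})\subseteq\overline{T(T\mathcal M)}\subseteq\overline{T\mathcal M}$), again lies in $\mathcal C$, and obviously $\overline{T\mathcal M}\subseteq\mathcal M$.

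The first step I would establish is the estimate $\dim(\mathcal M/\overline{T\mathcal M})\leq 1$. Suppose it fails, so that $\mathcal M/\overline{T\mathcal M}$ has dimension at least two and hence contains two distinct one-dimensional subspaces $W_1,W_2$. If $q\colon\mathcal M\to\mathcal M/\overline{T\mathcal M}$ is the quotient map, then $q^{-1}(W_1)$ and $q^{-1}(W_2)$ are closed, each contains $\overline{T\mathcal M}\supseteq T\mathcal M$ and is therefore invariant under $T$, and the two are not comparable because their $q$-images are not. This contradicts unicellularity, so the estimate holds. Consequently either $\overline{T\mathcal M}=\mathcal M$, which is (a), or $\overline{T\mathcal M}$ has codimension $1$ in $\mathcal M$. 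In the latter case no subspace lies strictly between $\overline{T\mathcal M}$ and $\mathcal M$, so, $\mathcal C$ being a chain, every member of $\mathcal C$ properly contained in $\mathcal M$ is contained in $\overline{T\mathcal M}$; since $\overline{T\mathcal M}$ is itself such a member, the predecessor $\mathcal M_-$ (the closed linear span of all members of $\mathcal C$ properly contained in $\mathcal M$) equals $\overline{T\mathcal M}$, which is in particular a proper subspace of $\mathcal M$. This is (c). The three alternatives are mutually exclusive: (a) forces $\overline{T\mathcal M}=\mathcal M$, whereas (b) and (c) force codimension $1$, and (b) requires $\mathcal M=\mathcal M_-$ while in (c) one has $\mathcal M_-=\overline{T\mathcal M}\neq\mathcal M$. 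Thus exactly one of (a), (b), (c) holds for each $\mathcal M$ (in fact it is always (a) or (c)).

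For the last assertion I would argue as follows: if $\mathcal C$ is continuous, then $\mathcal M=\mathcal M_-$ for every $\mathcal M\in\mathcal C$, so (c) — which needs $\mathcal M\neq\mathcal M_-$ — cannot occur, and by the trichotomy $\overline{T\mathcal M}=\mathcal M$ for every $\mathcal M\in\mathcal C$. Since $\mathcal C=\mathrm{Lat}(T)$, this says that $T\mathcal M$ is dense in $\mathcal M$ for every closed invariant subspace $\mathcal M$, i.e.\ $T$ is a strong quasi-affinity.

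The step I expect to be the main obstacle is the codimension estimate $\dim(\mathcal M/\overline{T\mathcal M})\leq 1$: for a general compact operator $\overline{T\mathcal M}$ may have codimension two or more in an invariant subspace $\mathcal M$, and it is precisely unicellularity — exploited through the pull-back of lines in the quotient $\mathcal M/\overline{T\mathcal M}$ — that rules this out. The remaining ingredients (that $\mathcal C=\mathrm{Lat}(T)$, the identification $\overline{T\mathcal M}=\mathcal M_-$ in the codimension-one case, and the mutual exclusivity of (a), (b), (c)) are routine bookkeeping with the chain.
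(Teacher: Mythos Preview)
Your proof is correct and follows essentially the same line as the paper's: establish $\dim(\mathcal M/\overline{T\mathcal M})\le 1$ by pulling back two distinct lines in the quotient to obtain incomparable $T$-invariant subspaces, observe that $\overline{T\mathcal M}\in\mathcal C$ because $\mathcal C=\mathrm{Lat}(T)$, and then sort out the position of $\overline{T\mathcal M}$ relative to $\mathcal M_-$. Your additional remark that alternative (b) in fact never arises (since $\overline{T\mathcal M}\in\mathcal C$, $\overline{T\mathcal M}\subsetneq\mathcal M$, and codimension one force $\mathcal M_-=\overline{T\mathcal M}\neq\mathcal M$) is correct and sharpens the paper's trichotomy to a dichotomy between (a) and (c); the paper simply records both possibilities $\mathcal M_-=\overline{T\mathcal M}$ and $\mathcal M_-=\mathcal M$ without eliminating the second.
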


It should be noted that Proposition \ref{zdravje} immediately implies that $V$ is a strong quasi-affinity and that $V$ is quasi-nilpotent by \cite[Example 7.2.5]{RadRos}.  

\begin {proof}
Assume that $\overline{T\mathcal M}$ is a proper subspace of $\mathcal M$. 
We claim that the dimension of the quotient space $\mathcal M/\overline{T\mathcal M}$ is  at most one. Otherwise, one can find two proper incomparable closed subspaces $\mathcal N_1$ and $\mathcal N_2$ of $\mathcal M$ that properly contain $\overline{T\mathcal M}$. 
Since $\mathcal N_1$ and $\mathcal N_2$ are also invariant under $T$, this contradicts unicellularity of $T$. 
Unicelullarity of $T$ also implies that $\overline{T\mathcal M}\in \mathcal C$. 
Since the codimension of $\mathcal M_-$ in $\mathcal M$ is also at most one, we have $\mathcal M_-=\overline{T\mathcal M}$ or $\mathcal M_-=\mathcal M.$

If $\mathcal C$ is a continuous chain, then it is obvious that (a) holds. 
\end {proof}

\begin {proposition}
Let $E$ be a Banach lattice and let $A$ be a positive compact unicellular operator on $E$ such that the unique triangularizing chain for $T$ consists of closed ideals of $E$. 
If a positive operator $B$ semi-commutes with $A$, then $A$ and $B$ are simultaneously triangularizable. 
\end {proposition}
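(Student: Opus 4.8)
The plan is to prove that a maximal chain $\mathcal D$ of closed subspaces invariant under both $A$ and $B$ is automatically a triangularizing chain for $\{A,B\}$, the engine being a reducibility lemma extracted from the ideal-irreducible results of Gao together with Lomonosov's theorem. Write $T=A$. Since $A$ is compact it is triangularizable, and since it is unicellular the lattice $\mathcal C$ of all closed $A$-invariant subspaces is totally ordered; a gap of dimension $\ge 2$ in $\mathcal C$ would make a triangularizing chain impossible, so $\mathcal C$ is complete, has all gaps of dimension $\le 1$, and coincides with the unique triangularizing chain of $A$. By hypothesis every member of $\mathcal C$ is a closed ideal, so in fact \emph{every closed $A$-invariant subspace of $E$ is a closed ideal}; moreover $A$ is not a scalar operator, since a scalar on a space of dimension $\ge 2$ has a non-totally-ordered invariant subspace lattice. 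Finally, by \cite[Lemma 2.2]{Dkankom} a closed subspace is invariant under $A$ and $B$ exactly when it is invariant under $A$ and $A+B$, and $A(A+B)-(A+B)A=AB-BA$, so replacing $B$ by $A+B$ we may assume $0\le A\le B$ while retaining the semi-commutation; the point of this reduction is that a closed $B$-invariant ideal is then automatically $A$-invariant. (When $E$ is real I would complexify as in the proof of Theorem \ref{extensione}.)

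The heart of the matter is the following reducibility lemma: if $A$ is a positive compact unicellular operator whose triangularizing chain consists of closed ideals, $\dim E\ge 2$, $B\ge 0$, $0\le A\le B$, and $A$ and $B$ semi-commute, then $\{A,B\}$ has a nontrivial common closed invariant subspace. To prove it I split into two cases. If $B$ is ideal-reducible, a nontrivial closed $B$-invariant ideal $\mathcal I$ is automatically $A$-invariant (as $0\le A\le B$), so it is the desired subspace. If $B$ is ideal-irreducible, then the relevant result of Gao (\cite[Corollary 3.5]{Gao}, exactly as invoked in the proof of Theorem \ref{keksington}, or its symmetric version when $BA\ge AB$) forces $AB=BA$; then $A$ is a non-scalar compact operator commuting with $B$, so by Lomonosov's theorem \cite{Lom} the operator $A$ has a nontrivial closed hyperinvariant subspace, which is invariant under $B$ as well.

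Now I would run the standard maximal-chain argument. Let $\mathcal D$ be a maximal chain of closed subspaces invariant under $A$ and $B$; it is complete, and since its members are $A$-invariant we have $\mathcal D\subseteq\mathcal C$, so each of them is a closed ideal. Suppose, for contradiction, that some gap $\mathcal M/\mathcal M_-$ of $\mathcal D$ has dimension $\ge 2$. Then $\mathcal M$ and $\mathcal M_-$ are closed ideals, the induced operators $A_{\mathcal M},B_{\mathcal M}$ on the Banach lattice $\mathcal M/\mathcal M_-$ are positive, semi-commute, and satisfy $0\le A_{\mathcal M}\le B_{\mathcal M}$, and $A_{\mathcal M}$ is compact. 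A short computation shows that $A_{\mathcal M}$ is again unicellular (the preimage of any $A_{\mathcal M}$-invariant closed subspace is $A$-invariant, hence lies in the chain $\mathcal C$, so all such subspaces are comparable) and that its triangularizing chain consists of closed ideals of $\mathcal M/\mathcal M_-$ (quotients of ideals by ideals). The reducibility lemma then yields a nontrivial closed subspace of $\mathcal M/\mathcal M_-$ invariant under both $A_{\mathcal M}$ and $B_{\mathcal M}$; its preimage in $\mathcal M$ is a closed subspace lying strictly between $\mathcal M_-$ and $\mathcal M$ and invariant under $A$ and $B$, contradicting the maximality of $\mathcal D$. Hence every gap of $\mathcal D$ has dimension $\le 1$, and, being complete, $\mathcal D$ is a triangularizing chain for $\{A,B\}$.

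I expect the reducibility lemma to be the main obstacle, and the key conceptual point there is that unicellularity is precisely what makes the two supplies of invariant subspaces usable: it promotes the hyperinvariant subspace delivered by Lomonosov to a closed ideal (so that it still behaves well after passing to quotients), and it guarantees that a $B$-invariant ideal is automatically $A$-invariant. A secondary technical point is to verify that the gap operators $A_{\mathcal M}$ inherit all the hypotheses of the lemma, which requires the unicellularity of $A_{\mathcal M}$ established above and the stability of closed ideals under quotients.
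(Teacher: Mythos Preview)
Your argument is correct and follows essentially the same route as the paper: take a maximal chain of common invariant subspaces, observe that its members are automatically closed ideals (being $A$-invariant), and on a gap of dimension $\ge 2$ force $A_{\mathcal M}$ and $B_{\mathcal M}$ to commute via Gao's ideal-irreducibility result, then invoke Lomonosov on the compact non-scalar $A_{\mathcal M}$ to contradict maximality.

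Two small differences worth noting. First, to show $A_{\mathcal J}\neq 0$ on the gap, the paper appeals to Proposition~\ref{zdravje} (the codimension of $\overline{A\mathcal J}$ in $\mathcal J$ is at most one), whereas you deduce it from the inherited unicellularity of $A_{\mathcal M}$; your route is slightly more self-contained and avoids that auxiliary proposition. Second, for the real case the paper stays in $E$ and uses Sirotkin's real Lomonosov theorem \cite{Sir} rather than complexifying; either device works. Your separate ``Case~1'' ($B_{\mathcal M}$ ideal-reducible) in the reducibility lemma is harmless but in fact vacuous in the application, since maximality of $\mathcal D$ already forces $B_{\mathcal M}$ (equivalently $A_{\mathcal M}+B_{\mathcal M}$) to be ideal-irreducible on each gap---this is exactly what the paper asserts directly.
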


\begin {proof}
Let $\mathcal C$ be a maximal chain of closed subspaces invariant under both $A$ and $B$. By the assumption, every subspace in $\mathcal C$ is an ideal of $E$.  Due to the maximality, $\mathcal C$ is a complete chain.
Suppose now that the dimension of $\mathcal J/\mathcal J_-$ is at least two for some $\mathcal J\in \mathcal C$.  Since the operator $A_{\mathcal J}+B_{\mathcal J}$ is ideal-irreducible on $\mathcal J/\mathcal J_-$ and semi-commutes with the operator $A_{\mathcal J}$, \cite[Corollary 3.5]{Gao} implies 
$$(A_{\mathcal J}+B_{\mathcal J})A_{\mathcal J}=A_{\mathcal J}(A_{\mathcal J}+B_{\mathcal J}),$$ so that we have 
$A_{\mathcal J}B_{\mathcal J}=B_{\mathcal J}A_{\mathcal J}.$
We claim that $A_{\mathcal J}$ is nonzero operator on $\mathcal J/\mathcal J_-$. Otherwise we have $A\mathcal J\subseteq \mathcal J_-$, and since Proposition \ref{zdravje} implies that the codimension of $\overline{A\mathcal J}$ is at most one, we have that the dimension of $\mathcal J/\mathcal J_-$ is at most one as well and we reached to a contradiction. 

The operator $A_{\mathcal J}$ is not a scalar multiple of the identity operator on $\mathcal J/ \mathcal J_-$ as otherwise $A$ would have at least two incomparable closed invariant subspaces which contradicts its unicellularity.  
By \cite[Corollary 2.4]{Sir} (if $E$ is a real lattice) or \cite{Lom} (if $E$ is a complex lattice), the operator $A_{\mathcal J}$ has a nontrivial hyperinvariant subspace. However, this is in contradiction with the maximality of the chain $\mathcal C$. Therefore, $\mathcal C$ is a maximal chain of closed subspaces of $E$. 
\end {proof}

Whenever one of the operators $A$ and $B$ in Proposition \ref{spek izrek} is the Volterra operator or a Donoghue operator, then the other operator does not need to have any compactness properties. 

\begin {corollary}\label{simpo}
Let $V$ be the Volterra operator on $L^2[0,1]$ and let $S$ be the Donoghue operator on $l^2.$
Then the following statements hold. 
\begin {enumerate}
\item [(a)] If a positive operator $T$ on $L^2[0,1]$ semi-commutes with $V$ then $V$ and $T$ are simultaneously ideal-triangularizable. 
\item [(b)] If a positive operator $T$ on $l^2$ semi-commutes with $S$, then $S$ and $T$ are simultaneously ideal-triangularizable. 
\end {enumerate}
\end {corollary}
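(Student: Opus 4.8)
The plan is to deduce Corollary~\ref{simpo} directly from the preceding proposition, after checking that the Volterra operator $V$ and the Donoghue operator $S$ satisfy its hypotheses. Recall that the proposition requires $A$ to be a positive compact unicellular operator whose unique triangularizing chain consists of closed ideals of $E$. Compactness and unicellularity of $V$ and $S$ were already recorded in Section~5, so the only thing left to verify is that their triangularizing chains are made of closed \emph{ideals}. For the Volterra operator on $L^2[0,1]$ the lattice of closed invariant subspaces is $\{L^2[t,1]:t\in[0,1]\}$, and each $L^2[t,1]$ is an order ideal of $L^2[0,1]$: if $|f|\le|g|$ a.e.\ and $g$ vanishes a.e.\ on $[0,t]$, then so does $f$. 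For the Donoghue operator on $l^2$ the invariant-subspace lattice is $\{\mathcal M_n\}_{n\in\mathbb N_0}$ with $\mathcal M_n=\mathrm{span}\{e_0,\dots,e_n\}$; since $l^2$ is ordered componentwise and $e_0,\dots,e_n$ are pairwise disjoint atoms, $\mathcal M_n$ is exactly the band (hence closed ideal) generated by these atoms, so again the chain consists of closed ideals.

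Once this is in place, part~(a) and part~(b) are immediate: taking $A=V$ (resp.\ $A=S$) and $B=T$ in the proposition, the hypothesis that $T$ semi-commutes with $A$ is precisely what is assumed, so $A$ and $T$ are simultaneously triangularizable. To upgrade ``triangularizable'' to ``ideal-triangularizable'' I would reread the proof of the proposition: the maximal chain $\mathcal C$ of closed subspaces invariant under both $A$ and $T$ is built starting from a maximal chain of closed \emph{ideals} invariant under $A$ and $T$ (which exists and is complete), and the argument shows no strictly larger chain of subspaces can be inserted, so $\mathcal C$ itself is the ideal chain and it is already maximal among all closed subspaces; in particular the triangularizing chain one obtains is a chain of closed ideals. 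Thus $A$ and $T$ are simultaneously ideal-triangularizable, which is the stated conclusion.

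The main point requiring care—though it is not really an obstacle—is making sure the proposition as proved genuinely yields an \emph{ideal}-triangularizing chain and not merely a subspace one; this amounts to observing that in that proof $\mathcal C$ was taken from the start to be a maximal chain of closed ideals invariant under $A$ and $B$, and the dimension-counting argument (using Proposition~\ref{zdravje} to rule out $\dim(\mathcal J/\mathcal J_-)\ge 2$) forces every such quotient to be at most one-dimensional, so the ideal chain is already maximal as a chain of closed subspaces. Hence no separate work is needed: Corollary~\ref{simpo} follows by specialising $A$ to $V$ and to $S$ respectively. I would therefore write the proof of the corollary in two or three lines, invoking the preceding proposition twice after the short verification that the canonical invariant-subspace chains of $V$ and $S$ are chains of closed ideals.

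\begin{proof}
The lattice of closed invariant subspaces of the Volterra operator $V$ consists of the subspaces $L^2[t,1]$, $t\in[0,1]$, each of which is a closed ideal of $L^2[0,1]$. The lattice of closed invariant subspaces of the Donoghue operator $S$ consists of the subspaces $\mathcal M_n=\mathrm{span}\{e_0,\dots,e_n\}$, which are the bands generated by the pairwise disjoint atoms $e_0,\dots,e_n$ of $l^2$, hence closed ideals. Since $V$ and $S$ are positive compact unicellular operators, in both cases the hypotheses of the preceding proposition are satisfied, and applying it with $A=V$ (resp.\ $A=S$) and $B=T$ yields that $V$ and $T$ (resp.\ $S$ and $T$) are simultaneously ideal-triangularizable.
\end{proof}
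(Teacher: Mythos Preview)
Your proposal is correct and matches the paper's intended argument: the corollary is stated there without proof, as an immediate specialisation of the preceding proposition to $A=V$ and $A=S$, once one checks (as you do) that the invariant-subspace lattices $\{L^2[t,1]\}$ and $\{\mathcal M_n\}$ consist of closed ideals. Your remark that the proof of that proposition actually produces an \emph{ideal}-triangularizing chain---because $\mathcal C$ is a maximal chain of closed subspaces invariant under both operators, and unicellularity of $A$ forces every member of $\mathcal C$ to lie in $A$'s invariant-subspace lattice, hence to be an ideal---is precisely the observation needed to justify the word ``ideal'' in the conclusion (note, though, that in the proposition's proof $\mathcal C$ is first taken as a maximal chain of \emph{subspaces}, not ideals; the ideal property is then inferred).
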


In \cite[Example 3.3]{BDFRZ}, a Banach algebra generated by the Volterra operator $V$ and a multiplication operator $M$ on $L^2[0,1]$ defined by 
$(Mf)(x)=xf(x)$ was considered. Since the relation $MV-VM=V^2$ implies $MV\geq VM\geq 0$, Corollary \ref{mlekozavse} implies that $MV-VM$ is contained in the radical of the Banach algebra generated by $V$ and $M$. The authors \cite{BDFRZ} observed that this actually follows from the fact that $V$ and $M$ are simultaneously ideal-triangularizable with the unique triangularizing chain $\{L^2[t,1]\}_{t\in[0,1]}.$
The following result states that the Volterra operator is actually in the radical of the possibly much bigger Banach algebra generated by $\langle V]$ and $[V\rangle.$ 

\begin {theorem}
Let $V$ be the Volterra operator on $L^2[0,1]$ and let $S$ be the Donoghue operator on $l^2.$
Then $V$ is in the radical of the Banach algebra generated by $\langle V]$ and $[V\rangle$, and $S$ is in the radical of the Banach algebra 
generated by $\langle S]$ and $[S\rangle.$
\end {theorem}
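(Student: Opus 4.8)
The plan is to show that $V$ is quasi-nilpotent in the Banach algebra $\mathcal{B}$ generated by $\langle V]$ and $[V\rangle$, i.e.\ that $r(CV) = 0$ for every $C \in \mathcal{B}$, and likewise for $S$. The argument should mirror the proof of Theorem \ref{keksington}, but instead of exploiting a positive commutator we exploit the unicellularity of $V$ together with the strong quasi-affinity property established in Proposition \ref{zdravje}. First I would reduce to the case of a word: an arbitrary $C \in \mathcal{B}$ is a norm limit of polynomial combinations of finitely many operators $T_1,\dots,T_k$ taken from $\langle V] \cup [V\rangle$, and since the norm limit of compact quasi-nilpotent operators (here $CV$ is compact because $V$ is) is quasi-nilpotent, it suffices to treat $C$ lying in the (non-closed) algebra generated by finitely many such $T_i$.

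Next I would produce a common triangularizing chain. For each $T_i \in \langle V]$ we have $VT_i \geq T_i V$, and for each $T_i \in [V\rangle$ we have $T_i V \geq V T_i$; I would like to conclude that the unique triangularizing chain $\mathcal{C} = \{L^2[t,1]\}_{t\in[0,1]}$ of $V$ is invariant under every $T_i$. This is the analogue of Corollary \ref{simpo}: a positive operator semi-commuting with $V$ leaves each closed ideal of the chain invariant — indeed the closed ideals of $L^2[0,1]$ are exactly the $L^2[F]$ for measurable $F$, and semi-commutation with $V$ forces invariance of the $V$-invariant ideals $L^2[t,1]$, since these are precisely the $V$-invariant closed subspaces and any closed subspace invariant under a positive operator semi-commuting with $V$ that also must be $V$-invariant lies in $\mathcal{C}$. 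Granting this, $\mathcal{C}$ is a common (ideal-)triangularizing chain for $V$ and all the $T_i$, hence also for $C$ and for $CV$. The same reasoning applies to $S$ with its chain $\{\mathcal{M}_n\}_{n\in\mathbb{N}_0}$, where the closed ideals of $l^2$ are the coordinate subspaces and the only ones invariant under $S$ are the $\mathcal{M}_n$.

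Finally I would invoke Ringrose's theorem. Along the continuous chain $\mathcal{C}$ for $V$, every quotient $\mathcal{M}/\mathcal{M}_-$ is either zero or one-dimensional, and the diagonal coefficient of $V$ on each one-dimensional quotient $\mathcal{M}/\mathcal{M}_-$ is zero because Proposition \ref{zdravje}(a) gives $\overline{V\mathcal{M}} = \mathcal{M}$ for all $\mathcal{M}$ in a continuous chain, which is incompatible with $V$ acting as a nonzero scalar on any one-dimensional quotient (a nonzero scalar on $\mathcal{M}/\mathcal{M}_-$ together with $\overline{V\mathcal{M}_-}=\mathcal{M}_-$ would force $\overline{V\mathcal{M}}\neq\mathcal{M}$). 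Hence every diagonal coefficient of $C V$ vanishes, and by compactness of $CV$ and \cite[Theorem 7.2.3]{RadRos} (or \cite[Theorem 7.2.7]{RadRos}) we get $\sigma(CV) = \{0\}$, so $r(CV) = 0$; as $C$ ranges over a dense subalgebra and then, by the limiting argument above, over all of $\mathcal{B}$, we conclude $V$ is in the radical of $\mathcal{B}$. For the Donoghue operator the chain $\{\mathcal{M}_n\}$ is discrete rather than continuous, so I would instead use $S\mathcal{M}_n = \mathcal{M}_{n-1}$ directly: the induced operator of $S$ on each quotient $\mathcal{M}_n/\mathcal{M}_{n-1}$ is the zero operator, so every diagonal coefficient of $CS$ is zero, and the same Ringrose-type argument gives $r(CS) = 0$.

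The main obstacle I anticipate is the invariance step — verifying carefully that a positive operator semi-commuting with $V$ (respectively $S$) must leave every member of the canonical chain invariant. For $V$ one needs that the $V$-invariant subspaces are exactly the $L^2[t,1]$ (unicellularity, known) and that semi-commutation propagates invariance through the chain; the cleanest route is to observe that $L^2[t,1]$ is a closed ideal, use that semi-commutation with a positive operator forces each $V$-invariant \emph{ideal} to be invariant (as in the proof of Proposition \ref{spek izrek}, replacing $B$ by $V+B$ when necessary and applying \cite[Corollary 3.5]{Gao} along a maximal chain of invariant ideals, which here must coincide with $\mathcal{C}$ by unicellularity). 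Once this is in place the rest is the now-standard Ringrose computation.
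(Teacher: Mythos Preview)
Your approach is essentially the paper's: obtain a common triangularizing chain for $\langle V]\cup[V\rangle$ (respectively $\langle S]\cup[S\rangle$) and then apply a Ringrose-type spectral theorem to conclude $r(CV)=0$ (respectively $r(CS)=0$). The paper compresses this by directly citing Corollary~\ref{simpo} for the invariance step---which is exactly the ``main obstacle'' you anticipate---and then invoking \cite[Theorem~7.2.4]{RadRos} for $V$ and \cite[Theorem~7.2.3]{RadRos} for $S$; in particular no limiting argument is needed, since once the generating set is triangularized by a fixed chain, the entire Banach algebra is as well.

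One passage in your write-up is muddled and worth correcting: for the Volterra chain $\{L^2[t,1]\}_{t\in[0,1]}$, continuity means $\mathcal M=\mathcal M_-$ for \emph{every} $\mathcal M$, so there are no one-dimensional quotients at all and Ringrose gives $\sigma(CV)=\{0\}$ vacuously. Your attempt to argue that a hypothetical nonzero diagonal coefficient would contradict $\overline{V\mathcal M}=\mathcal M$ is both unnecessary and not valid as stated (a nonzero scalar action on $\mathcal M/\mathcal M_-$ is perfectly compatible with $\overline{V\mathcal M}=\mathcal M$). For the Donoghue operator your reasoning via $S\mathcal M_n=\mathcal M_{n-1}$ is correct and matches the paper, which phrases it as ``$S$ is quasi-nilpotent, hence all its diagonal coefficients vanish.''
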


\begin {proof}
By Corollary \ref{simpo} the set $\langle V]\cup[V\rangle$ is triangularizable, so that the closure of the algebra $\mathcal A$ generated $\langle V]$ and $[V\rangle$ is triangularizable as well. 
\cite[Theorem 7.2.4]{RadRos} implies that the operator $AV$ is quasi-nilpotent for all $A\in \mathcal A$ which finishes the proof. 

For the proof in the case of the Donoghue operator note first that it is quasi-nilpotent by \cite[Problem 96]{Hal} and its solution. \cite[Theorem 7.2.3]{RadRos} implies that every diagonal coefficient of the operator $S$ is zero.  Since $\langle S]\cup [S\rangle$ is triangularizable by Corollary \ref{simpo}, the Banach algebra $\mathcal B$ generated by $\langle S]\cup [S\rangle$ is triangularizable as well.   For all $B\in\mathcal B$ every diagonal coefficient of the operator $BS$ is zero, so that  the operator $BS$ is quasi-nilpotent by \cite[Theorem 7.2.3]{RadRos} and the proof is finished. 
\end {proof}

\begin {remark}
Let $V$ be the Volterra operator and let $\mathcal A$ be the Banach algebra generated by all compact operators in $\langle V]\cup [V\rangle$. 
Then $\mathcal A$ is triangularizable with the unique triangularizing chain $\{L^2[t,1]\}_{t\in [0,1]}.$ By  \cite[Corollary 7.2.4]{RadRos} every operator in $\mathcal A$ is quasi-nilpotent, so that $\mathcal A$ is a radical Banach algebra. 
\end {remark}

\begin {remark}
Suppose that $A$  and $B$ be positive operators on a Banach lattice $E$ with a positive commutator $AB-BA$. If one of the operators is power compact, then 
 $AB-BA$ is quasi-nilpotent by \cite[Corollary 3.2]{RD}. In particular, this implies that a positive commutator $AB-BA$ of positive power compact operators $A$ and $B$ is quasi-nilpotent which also follows from Theorem \ref{extensione}.
\end {remark}

We finish this paper with two open questions.

\begin {enumerate}
\item [(a)] How big is the Banach algebra generated by $\langle V]\cup [V\rangle$?

\item [(b)] Can we assume in Theorem \ref{extensione} that only one of the operators $A$ and $B$ is power compact?
\end {enumerate}

{\it Acknowledgments.} This work was supported in part by the Slovenian Research Agency. \\

\bigskip
		
\noindent
Marko Kandi\'{c}, Klemen \v Sivic: \\
     Faculty of Mathematics and Physics \\
     University of Ljubljana \\
     Jadranska 19 \\
     1000 Ljubljana \\
     Slovenia \\[1mm]
     E-mails : marko.kandic@fmf.uni-lj.si, klemen.sivic@fmf.uni-lj.si 

\end{document}